\newcommand{\leqnomode}{\tagsleft@true}
\newcommand{\reqnomode}{\tagsleft@false}
\DeclarePairedDelimiter\norm{\lVert}{\rVert}
\renewcommand*{\P}{\mathbf{P}}
\DeclareMathOperator{\E}{\mathbf{E}}
\newcommand*\de{\mathop{}\!\mathrm{d}}
\DeclarePairedDelimiterXPP\Prob[1]{\mathrm{P}}{[}{]}{}{#1}
\newcommand*{\numberset}{\mathbb}
\newcommand*{\N}{\numberset{N}}
\newcommand*{\R}{\numberset{R}}
\newcommand{\oneover}[1]{\frac{1}{#1}}
\newcommand{\poly}{P_{n,d}^\beta}
\newcommand{\ball}{\mathbb{B}^d}
\newcommand*{\G}{\Gamma}
\newcommand*{\vol}{\mathrm{vol}}
\newcommand{\F}{\mathrm{F}}
\renewcommand{\c}{\mathrm{c}}
\newcommand{\I}{\mathrm{I}\;\!}
\newcommand{\const}{\frac{K}{\kappa_d}}
\renewcommand{\epsilon}{\varepsilon}
\renewcommand{\phi}{\varphi}
\theoremstyle{plain}
\newtheorem{theorem}{Theorem}[section]
\newtheorem{corollary}[theorem]{Corollary}
\newtheorem{lemma}[theorem]{Lemma}
\theoremstyle{definition}
\theoremstyle{remark}
\newtheorem{remark}{Remark}
\title{\textbf{Phase transition for the volume of \\ high-dimensional random polytopes}}
\author{Gilles Bonnet\thanks{\textit{Fakultät für Mathematik, Ruhr-Universität Bochum.} 
}, 
Zakhar Kabluchko\thanks{\textit{Institut für Mathematische Stochastik, Westfälische Wilhelms-Universität Münster.} 
}, 
and Nicola Turchi\thanks{\textit{Unité de Recherche en Mathématiques, Université du Luxembourg}.
}}
\date{}
\begin{document}
\maketitle

\begin{abstract}
    \noindent 
    The beta polytope $P_{n,d}^\beta$ is the convex hull of $n$ i.i.d.\ random points distributed in the unit ball of 
    \(\R^d\) according to a density proportional to $(1-\norm{x}^2)^{\beta}$ if $\beta>-1$ (in particular, \(\beta=0\) corresponds to the uniform distribution in the ball), or uniformly on the unit sphere if $\beta=-1$. We show that the expected normalized volumes of high-dimensional beta polytopes exhibit a phase transition and we describe its shape.
    We derive analogous results for the intrinsic volumes of beta polytopes and, when $\beta=0$, their number of vertices.
    
    \bigskip
    
    \noindent \textbf{Keywords}. Beta distribution, convex hull, expected volume, phase transition, random polytopes.
    
    \noindent \textbf{MSC 2010}.
    Primary 52A23; 
    Secondary 52A22, 
    52B11, 
    60D05. 
\end{abstract}

\section{Introduction}
In the last few years there has been an increasing interest in the study of high-dimensional random convex hulls, see \cite{BCGTT,bonnet2019facets,chakraborti2020note,dyer1992volumes,frieze2020random,gatzouras2009threshold,pivovarov2007volume} for example. Older works include \cite{barany1988shape,buchta1986conjecture,kingman1969random,miles1971isotropic}.
A common way to construct such objects is described by the following general principle. For every couple of natural numbers \(d\) and \(n\) with $n>d$, and any probability measure \(\mu\) on \(\R^d\) whose support is compact and not contained in a hyperplane, we consider the convex hull $P_{n,d}^{\mu}$ of $n$ i.i.d.\ random points distributed according to $\mu$.
Clearly $P_{n,d}^{\mu} \subseteq \mathrm{conv}(\mathrm{supp} (\mu))$ a.s., where $\mathrm{conv}$ denotes the convex hull operator and $\mathrm{supp} (\mu)$ the support of \(\mu\).
Therefore, if $n = n(d)$ and $\mu = \mu (d)$, then the sequence of expected normalized volumes
\begin{equation} \label{eq:ratio}
    \frac{ \E \vol_d (P_{n,d}^{\mu})}{ \vol_d ( \mathrm{conv}(\mathrm{supp} (\mu))) } , \quad d\in\{ 2, 3 , \ldots \},
\end{equation}
takes values in the interval $[0,1]$.

It is clear that if $n$ grows sufficiently fast, then the ratio above tends to $1$. On the other hand, in the slowest regime $n=d+1$ it goes to zero (see the appendix for a short proof of this claim).
Thus, between these extreme regimes there is a family of regimes where a transition happens. An interesting problem is to identify them. Namely, one would like to answer the following questions :
\begin{enumerate}[label={(Q\arabic*)}]
    \item \label{Q1} What are the regimes $n=n(d)$ where the transition happens ?
    \item \label{Q2} For such regimes, what is the shape of the transition ?
\end{enumerate}
The first question has been answered for a variety of families of distributions.
Dyer, F\"{u}redi and McDiarmid considered in \cite{dyer1992volumes} the setting where $\mu$ if the uniform discrete measure on the $2^d$ vertices of the unit cube $[-1,1]^d$ and showed that such regimes satisfy $n^{1/d}\to 2/\sqrt{e}$, meaning that if $n^{1/d} < 2/\sqrt{e}-\epsilon$ (resp. $>2/\sqrt{e}+\epsilon$), for all $d$ sufficiently large and for a fixed $\epsilon>0$, then the sequence \eqref{eq:ratio} tends to $0$ (resp. 1).
Gatzouras and Giannopoulos extended this result in \cite{gatzouras2009threshold} to the general setting where $\mu$ is the product $\mu_1^{\otimes d}$ of $d$ copies of a fixed symmetric measure $\mu_1$ with compact support in $\R$. The criterion in this setting has the same form, namely $n^{1/d}\to c$ where $c>0$ is a constant which can be expressed in term of $\mu_1$.
The picture changes when the convex hull of the support of the measure $\mu$ is the Euclidean unit ball $\ball$. It was already known from the work of Bárány and Füredi \cite{barany1988approximation},  in which they study the optimal approximation of the ball via deterministic polytopes, that the regimes where the transition happens must satisfy $n \geq d^{d/2}$.
In fact, Pivovarov proved \cite{pivovarov2007volume} that if $\mu$ is the uniform distribution on the Euclidean unit sphere $\mathbb{S}^{d-1}$, 
then the regimes answering \ref{Q1} satisfy 
\[
\frac{\log n} {d\log d} \to \frac{1}{2},
\]
i.e. the threshold is super-exponential.

The latter result has been extended in \cite{BCGTT} to the class of so-called beta distributions which have recently attracted a considerable attraction in stochastic geometry \cite{affentrager1991convex,bonnet2017monotonicity,giorgos, grote2019limit, gusakova2020delaunay, kabluchko2019angle, kabluchko2019angles,kabluchko2019recursive,kabluchko2019cones,kabluchko2019expected,kabluchko2018beta, miles1971isotropic,ruben1980canonical}. 
These distributions are described by their density, which is proportional to $(1-\norm{x}^2)^{\beta} \mathbf{1}(\norm{x}< 1)$, where $\norm{\cdot}$ denotes the Euclidean norm and where $\beta > - 1$ is a parameter which might depend on $d$. For example, \(\beta=0\) generates the uniform distribution on the Euclidean unit ball.
We extend this family of distributions by defining the beta distribution with $\beta=-1$ to be the uniform distribution on the unit sphere.
For such distributions, that we denote here by \(\mu_\beta\), it was shown in \cite{BCGTT} that \[
\frac{\log n  }{(d+2\beta) \log d} \to \frac{1}2 \]
is a criterion answering \ref{Q1}.
In particular, for any $\epsilon>0$,
\begin{equation} \label{eq:thresholdbeta}
    \lim_{d\to\infty} \frac{ \E\vol_d(P_{n,d}^\beta)}{ \vol_d ( \ball ) } = 
    \begin{cases}
        0 & \text{if } n \leq \exp \bigl( (1-\epsilon) (\frac{d}{2}+\beta)  \log d \bigr),
        \\1 & \text{if } n \geq \exp \bigl( (1+\epsilon) (\frac{d}{2}+\beta)  \log d \bigr),
    \end{cases}
\end{equation}
where we use \(\poly\) as a shorthand for \(P_{n,d}^{\mu_\beta}\).
This threshold phenomenon was obtained by generalizing Pivovarov's arguments. In the present paper we use instead an integral representation of the expected volume of beta polytopes obtained by Kabluchko, Temesvari and Thäle in \cite{kabluchko2019expected}, see Theorem \ref{thm:beta0} below for a statement of this  formula. By studying carefully this integral we will be able to refine \eqref{eq:thresholdbeta} by also answering \ref{Q2} for the beta polytopes model. 

Namely, we will show that whenever \(x\) is a positive constant and the number  \(n\) of random points grows like \(\exp\bigl((\frac{d}{2}+\beta)\log\frac{d}{2x}\bigr)\), then  
    \[\frac{\E\vol_d(\poly)}{\vol_d(\ball)}\to e^{-x}\in(0,1),\]
as \(d\) tends to infinity.

The rest of the paper is structured as follows. In the next section we introduce the required notation and present some preliminaries. In \Cref{sec:result} we state the main theorem and derive some simple but interesting corollaries. Finally, \Cref{sec:proofs} is dedicated to the proof of the theorem and its corollaries.

\section{Notation and preliminaries}
For an integer \(d\ge 2\), we denote by \(\ball\) the Euclidean unit ball in \(\R^d\) and by \(\mathbb{S}^{d-1}\) its boundary. We denote by \(\vol_d\) the Lebesgue measure on \(\R^d\) and we write \(\kappa_d\coloneqq\vol_d(\ball)= \pi^{d/2}/\Gamma(1+d/2)\).
For two sequences $a(d)$ and $b(d)$ we use the notation $a\sim b$, respectively $a=o(b)$, to mean that the ratio $a(d)/b(d)$ tends to $1$, respectively $0$, as $d\to\infty$.

Given a convex body \(K\subset \R^d\) and a number \(k\in\{0,\ldots, d\}\), the \(k\text{-th}\) intrinsic volume of \(K\), denoted by \(V_k(K)\), is the geometric functional defined by means of the Steiner formula, namely
\begin{equation}
    \label{eq:defintrinsic}
        \vol_d(K+t\mathbb{B}^d)=\sum_{k=0}^d t^{d-k}\kappa_{d-k}V_k(K),
\end{equation}
for every \(t\ge 0\). In particular, it holds that \(V_d=\vol_d\) and that \(V_{d-1}(K) \) is half the surface area of \(K\).
Moreover, whenever \(K\) is a polytope, i.e. the convex hull of a finite number of points, then we indicate with \(f_0(K)\) the number of its vertices.

\bigskip

The beta distribution with parameter \(\beta>-1\) is the continuous probability measure on \(\R^d\) with density 
\[
x\mapsto\frac{\Gamma(\frac{d}{2}+\beta+1)}{\pi^{\frac{d}{2}}\Gamma(\beta+1)}(1-\norm{x}^2)^{\beta},\quad \norm{x}<1,
\]
and \(0\) everywhere else. 
In particular, the beta distribution with \(\beta=0\) is just the uniform distribution in the Euclidean unit ball. We also say that the beta distribution with parameter \(\beta=-1\) is the uniform distribution on the Euclidean sphere \(\mathbb{S}^{d-1}\); this is justified by the fact that the beta distributions with parameters \(\beta>-1\) converge weakly to the uniform probability distribution on the sphere as \(\beta\to-1\) (a proof of this fact can be found in \cite{kabluchko2019expected}).

Let \(X_1,\ldots,X_n\) be i.i.d. random points in \(\R^d\) distributed according to the beta distribution with parameter \(\beta\).
We construct the beta polytope \(\poly\) as
\[
\poly\coloneqq\mathrm{conv}(\{X_1,\ldots,X_n\})\subset\ball.
\]
For \(z>0\), we introduce the non-negative quantities
    \begin{align*}
	    \c_{z}&\coloneqq\frac{\G(z+ 1/2)}{2\sqrt{\pi}\G(z+1)},\\
	    \quad\F_{z-1}(h)&\coloneqq 2z\,\c_z \int_{-1}^{h} (1-s^2)^{z-1}\de s,\quad h\in[-1,1].
    \end{align*}
Note that \(\F_{z-1}\) takes values in \([0,1]\), because \(\c_z\) is chosen such that \(\F_{z-1}(1)=1\).

An explicit representation of the expected volume of \(\poly\) was proved in \cite[Theorem 2.1 for the case \(\beta>-1\) and Corollary 3.9 for \(\beta=-1\)]{kabluchko2019expected} and can be stated as follows.
\begin{theorem}
\label{thm:beta0}
	Let
	    \[
	    K_{n,d}^\beta=\frac{(d+1)\kappa_d}{2^d\pi^{\frac{d+1}{2}}}\binom{n}{d+1}\Bigl(\beta+\frac{d+1}{2}\Bigr)\Biggl(\frac{\Gamma\bigl(\frac{d+2}{2}+\beta\bigr)}{\Gamma\bigl(\frac{d+3}{2}+\beta\bigr)}\Biggr)^{d+1}
	    \]
	 and \(q=(d+1)(\beta-\frac{1}{2})+\frac{d}{2}(d+3)\). Then,
	 \begin{equation}
	 \label{eq:originalvolume}
	     \E\vol_d(\poly)=K_{n,d}^\beta\int_{-1}^1\bigl(1-h^2\bigr)^q \,\F_{\beta+\frac{d-1}{2}}(h)^{n-d-1}\de h.
	 \end{equation}
\end{theorem}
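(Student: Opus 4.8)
The plan is to start from the elementary identity, valid for every polytope $P\subseteq\R^d$ and obtained by applying the divergence theorem to the vector field $x\mapsto x/d$,
\[
  \vol_d(P)=\sum_{F}\varepsilon_F\,\vol_d\bigl(\mathrm{conv}(\{0\}\cup F)\bigr),
\]
the sum ranging over the facets $F$ of $P$, with $\varepsilon_F=+1$ if the outer normal of $F$ points away from the origin and $\varepsilon_F=-1$ otherwise. Applied to $\poly$ this expresses $\vol_d(\poly)$ as a signed sum over the $d$-subsets of $\{X_1,\dots,X_n\}$ that span a facet, i.e.\ over those $d$-subsets whose complementary $n-d$ points all lie strictly on one side of the spanned hyperplane. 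Write $\F$ for $\F_{\beta+\frac{d-1}{2}}$; a direct computation (integrating out $d-1$ coordinates) shows that $\F$ is the distribution function of the orthogonal projection of the $\beta$-distribution in $\R^d$ onto a fixed line, that projection being the $\beta$-distribution in $\R$ with parameter $\beta+\frac{d-1}{2}$. Taking expectations, using exchangeability, and conditioning on a generic $d$-tuple $X_1,\dots,X_d$ spanning a hyperplane at distance $h\in[0,1)$ from the origin, each of the remaining $n-d$ points falls on the origin's side with probability $\F(h)$ and on the far side with probability $1-\F(h)$, while $\varepsilon_F=+1$ in the first case and $-1$ in the second; since moreover $\vol_d(\mathrm{conv}(\{0\}\cup F))=\frac1d h\,\vol_{d-1}(F)$, this yields
\[
  \E\vol_d(\poly)=\binom{n}{d}\,\E\!\left[\frac{h}{d}\,\vol_{d-1}(\mathrm{conv}\{X_1,\dots,X_d\})\bigl(\F(h)^{n-d}-(1-\F(h))^{n-d}\bigr)\right],
\]
where $h$ denotes the distance from the origin to $\mathrm{aff}\{X_1,\dots,X_d\}$.

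I would then evaluate this $d$-fold integral by the affine Blaschke--Petkantschin formula, integrating first over the hyperplane $H=H(u,h)$ with $u\in\mathbb{S}^{d-1}$, $h\ge0$, and then over the positions of $X_1,\dots,X_d$ within $H$; this introduces a Jacobian factor $\vol_{d-1}(\mathrm{conv}\{X_1,\dots,X_d\})$, so the integrand now carries $\vol_{d-1}(\mathrm{conv}\{X_1,\dots,X_d\})^2$. Writing $X_i=hu+\sqrt{1-h^2}\,Z_i$ with $Z_i\in\mathbb{B}^{d-1}\subset u^\perp$ and collecting all powers of $1-h^2$ — from the $d$ density factors $(1-\norm{X_i}^2)^\beta=(1-h^2)^\beta(1-\norm{Z_i}^2)^\beta$, from the $d$ volume elements, and from the two factors $\vol_{d-1}(\mathrm{conv}\{\cdot\})$ — while the $u$-integration only contributes the surface area of $\mathbb{S}^{d-1}$, one obtains
\[
  \E\vol_d(\poly)=C_{n,d,\beta}\int_0^1 h\,(1-h^2)^{m}\bigl(\F(h)^{n-d}-(1-\F(h))^{n-d}\bigr)\de h,
\]
with $m=d\beta+\tfrac12(d-1)(d+2)$ and $C_{n,d,\beta}$ an explicit constant whose only non-elementary ingredient is the moment $\int_{(\mathbb{B}^{d-1})^d}\vol_{d-1}(\mathrm{conv}\{Z_1,\dots,Z_d\})^2\prod_{i=1}^d(1-\norm{Z_i}^2)^\beta\,\de Z_1\cdots\de Z_d$ of the volume of a random $\beta$-simplex in $\mathbb{B}^{d-1}$, a Dirichlet-type integral with a known closed Gamma-function form.

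The last step is an integration by parts in $h$. Using $h(1-h^2)^m=-\frac1{2(m+1)}\frac{\de}{\de h}(1-h^2)^{m+1}$ and $\F'(h)=2\bigl(\beta+\tfrac{d+1}{2}\bigr)\c_{\beta+\frac{d+1}{2}}(1-h^2)^{\beta+\frac{d-1}{2}}$, the boundary terms vanish (at $h=1$ because $(1-h^2)^{m+1}=0$, at $h=0$ because $\F(0)=\tfrac12$ makes the bracket vanish), and $\frac{\de}{\de h}\bigl(\F^{n-d}-(1-\F)^{n-d}\bigr)=(n-d)\F'\bigl(\F^{n-d-1}+(1-\F)^{n-d-1}\bigr)$. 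The symmetry $\F(-h)=1-\F(h)$ lets one fold the integral from $[0,1]$ onto $[-1,1]$, turning the bracket into $2\,\F(h)^{n-d-1}$; since $(1-h^2)^{m+1}\F'(h)$ is a constant multiple of $(1-h^2)^{m+\beta+\frac{d+1}{2}}$, one is left with a constant times $\int_{-1}^1(1-h^2)^{m+\beta+\frac{d+1}{2}}\,\F(h)^{n-d-1}\de h$. A direct computation gives $m+\beta+\tfrac{d+1}{2}=q$, and bookkeeping the Gamma functions in the overall constant — the normalising constants of the $\beta$-densities, the factor $\c_{\beta+\frac{d+1}{2}}$ coming from $\F'$, the identity $\binom{n}{d}(n-d)=(d+1)\binom{n}{d+1}$, and the closed form of the $\beta$-simplex moment — shows that the constant equals $K_{n,d}^\beta$. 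For $\beta=-1$ the density is not integrable, so there I would deduce the formula by letting $\beta\downarrow-1$, using the weak convergence of the $\beta$-distributions to the uniform distribution on $\mathbb{S}^{d-1}$ and the continuity in $\beta$ of both sides.

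The conceptual crux is the signed facet/pyramid decomposition together with the correct identification of the side-probabilities and of the signs $\varepsilon_F$; the heaviest computational step is the evaluation of the $\beta$-simplex volume moment in $\mathbb{B}^{d-1}$ and the subsequent Gamma-function bookkeeping needed to recognise the prefactor as exactly $K_{n,d}^\beta$.
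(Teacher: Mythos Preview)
The paper does not prove this theorem at all: it is quoted as a preliminary result from \cite{kabluchko2019expected} (Theorem~2.1 there for $\beta>-1$, Corollary~3.9 for $\beta=-1$), and the rest of the paper only \emph{uses} the formula to study the high-dimensional limit. So there is no ``paper's own proof'' to compare against.

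That said, your outline is a correct derivation and is in fact essentially the route taken in \cite{kabluchko2019expected}: one decomposes the polytope into signed pyramids over its facets, recognises that the marginal of the $d$-dimensional beta law along a line is the one-dimensional beta law with parameter $\beta+\tfrac{d-1}{2}$ (so that $\F_{\beta+(d-1)/2}$ is exactly the relevant half-space probability), applies the affine Blaschke--Petkantschin formula to pick up the extra $\vol_{d-1}$ factor, and then integrates by parts in $h$ to pass from $\F^{n-d}-(1-\F)^{n-d}$ to $\F^{n-d-1}$. Your exponent bookkeeping is right: $m=d\beta+\tfrac12(d-1)(d+2)$ and $m+\beta+\tfrac{d+1}{2}=(d+1)\beta+\tfrac12(d^2+2d-1)=q$. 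The ``heaviest'' step you flag --- the second moment of the volume of a $\beta$-simplex in $\mathbb{B}^{d-1}$ --- is indeed known in closed Gamma form (Miles, Ruben--Miles) and is what produces the $\bigl(\Gamma(\tfrac{d+2}{2}+\beta)/\Gamma(\tfrac{d+3}{2}+\beta)\bigr)^{d+1}$ factor in $K_{n,d}^\beta$. The $\beta=-1$ case via weak convergence is also how the cited paper handles it. In short: your proposal is sound and matches the original source, but the present paper simply cites the formula rather than proving it.
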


\section{The main result and related corollaries} 
\label{sec:result}
We want to prove the following statement.
	\begin{theorem}
	\label{thm:main}
		Fix \(x\in(0,+\infty)\) and consider any sequence \(\beta=\beta(d)\ge -1\). Let \(n=n(d)\) be a sequence of natural numbers of the form
		\begin{equation}
		\label{eq:n}
			n=\Bigl(\frac{d}{2x+o(1)}\Bigr)^{d/2+\beta} ,
		\end{equation}
		where \(o(1)\) is a sequence converging to \(0\) as \(d\to\infty\).
		Then,
		\begin{equation} \label{eq:main-lim}
			\lim_{d\to\infty}\frac{\E\vol_d(\poly)}{\vol_d(\mathbb{B}^d)}=e^{-x}.
		\end{equation}
	\end{theorem}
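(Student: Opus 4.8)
The plan is to analyse the integral in \Cref{thm:beta0} after rewriting everything in terms of the single quantity $z\coloneqq\beta+\tfrac{d+1}{2}$ (so that $\F_{z-1}$ is exactly the distribution function appearing there). A short computation with Gamma functions gives $K_{n,d}^\beta/\kappa_d=2z(d+1)\binom{n}{d+1}\c_z^{d+1}$, and the arithmetic identity $q=(d+1)z-1$, combined with $\F_{z-1}'(h)=2z\c_z(1-h^2)^{z-1}$, yields $(1-h^2)^q=(2z\c_z)^{-1}(1-h^2)^{zd}\,\F_{z-1}'(h)$. Substituting $y=\F_{z-1}(h)$ in \eqref{eq:originalvolume} therefore turns the ratio we want to compute into
\[
\frac{\E\vol_d(\poly)}{\vol_d(\ball)}=(d+1)\binom{n}{d+1}\c_z^{d}\int_0^1 G\bigl(\F_{z-1}^{-1}(y)\bigr)^{d}\,y^{\,n-d-1}\de y,\qquad G(h)\coloneqq(1-h^2)^z .
\]
Since $\int_0^1(1-y)^d y^{n-d-1}\de y=\Beta(d+1,n-d)=\bigl((d+1)\binom{n}{d+1}\bigr)^{-1}$, the theorem is equivalent to the statement that, in the limit, replacing $G(\F_{z-1}^{-1}(y))$ by $(1-y)/\c_z$ inside that integral costs precisely a factor $e^{-x}$.

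To make this comparison I would integrate by parts in the definition of $\F_{z-1}$, using $\tfrac{\de}{\de h}(1-h^2)^z=-2zh(1-h^2)^{z-1}$, which gives for $h\in(0,1)$
\[
1-\F_{z-1}(h)=\c_z(1-h^2)^z\left(\frac1h-(1-h^2)^{-z}\int_h^1\frac{(1-s^2)^z}{s^2}\,\de s\right)=\c_z\,G(h)\,\bigl(1+\delta(h)\bigr),
\]
so that $G(\F_{z-1}^{-1}(y))=(1-y)/(\c_z(1+\delta))$ and the whole problem collapses to understanding $(1+\delta)^{-d}$ on the range of $y$ carrying the mass of $\int_0^1(1-y)^d y^{n-d-1}\de y$, i.e.\ for $1-y$ of order $(d+1)/n$.

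On that window I would show that the preimage $h=\F_{z-1}^{-1}(y)$ satisfies $1-h^2=\tfrac{2x}{d}(1+o(1))$; this is the step in which the hypothesis $n=(d/(2x+o(1)))^{d/2+\beta}$ enters, by taking $z$-th roots in $(1-h^2)^z=(1-y)\c_z^{-1}(1+\delta)^{-1}$ and observing that, since $z=\tfrac d2+\beta+\tfrac12$, every polynomial-in-$d$ factor and the spread of $1-y$ wash out at that scale. From $1-h^2=\tfrac{2x}{d}(1+o(1))$ one obtains $1-h=\tfrac xd(1+o(1))$, and a crude estimate of the integral remainder in the displayed formula gives $\delta(h)=(1-h)+O(d^{-2})$; hence $d\,\delta\to x$ uniformly on the window, so $(1+\delta)^{-d}\to e^{-x}$. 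Substituting this back — together with routine bounds showing that the range $1-y\notin\tfrac{d+1}{n}[d^{-C},d^{C}]$ contributes negligibly (on the far part one uses $G\le1$ and that $y^{\,n-d-1}$ is super-exponentially small, on the near part one uses $\delta\ge0$, hence $G^d\le((1-y)/\c_z)^d$, and compares with $\Beta(d+1,n-d)$) — yields $\int_0^1 G(\F_{z-1}^{-1}(y))^d y^{n-d-1}\de y=e^{-x}(1+o(1))\,\c_z^{-d}\,\Beta(d+1,n-d)$, and the prefactors cancel to leave exactly $e^{-x}$.

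The delicate point is this last paragraph: the value $e^{-x}$ is a second-order effect, because the leading asymptotics of all factors already cancel and give $1$, so $\delta$ — and with it the correspondence between $y$ and $h$ — has to be controlled with error $o(1/d)$, these quantities being raised to the $d$-th power; in particular the $o(1)$ hidden inside $n=(d/(2x+o(1)))^{d/2+\beta}$ is raised to a power of order $d$ and must be carried through the estimate for $1-h^2$. Securing all of this uniformly over the mass-carrying window, rather than merely at a single saddle point, is exactly what makes the tail estimates usable and the argument go through.
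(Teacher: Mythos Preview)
Your plan is sound and the key identity is correct: with \(z=\beta+\tfrac{d+1}{2}\) one has \(q=(d+1)z-1\), and the substitution \(y=\F_{z-1}(h)\) indeed turns \eqref{eq:originalvolume} into \((d+1)\binom{n}{d+1}\c_z^{\,d}\int_0^1 G(\F_{z-1}^{-1}(y))^d\,y^{\,n-d-1}\de y\) with \(G(h)=(1-h^2)^z\), so that the prefactor is \emph{exactly} \(1/\Beta(d+1,n-d)\) and the whole question becomes the pointwise comparison \(G(\F_{z-1}^{-1}(y))\) versus \((1-y)/\c_z\). This is a genuinely different packaging from the paper. The paper stays in the \(h\)-variable, splits \([-1,1]=[-1,a]\cup[a,b]\cup[b,1]\) with explicit \(a,b\) depending on \(D\) and \(\beta\), and on the middle piece performs the substitution \(t=N A\,\c_{D+\beta}(1-h^2)^{D+\beta}\), after which the integrand becomes (up to controlled factors) the \(\Gamma(2D,1)\) density, so that the weak law of large numbers replaces your Beta-function comparison; the factor \(e^{-x}\) then enters as \(a^{2D}\sim b^{2D}\sim e^{-x}\), which is exactly your \((1+\delta)^{-d}\to e^{-x}\) seen through the inverse change of variables. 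Both routes rest on the same analytic input --- your integration-by-parts identity \(1-\F_{z-1}(h)=\c_z(1-h^2)^z(1+\delta(h))\) is precisely \Cref{lem:bounds} --- and your observation that \(\delta\ge 0\) follows from \(\int_h^1(1-s^2)^{z-1}\de s\ge\int_0^{1-h^2}u^{z-1}\de u/2=(1-h^2)^z/(2z)\). What your route buys is the exact prefactor cancellation and a cleaner bookkeeping of constants; what the paper's route buys is that the window \([a,b]\) is written down explicitly (with a \(\beta\)-dependent logarithmic widening of \(a\)) and the two tail lemmas are short and self-contained, so one never has to argue uniformity of \(\delta\) over a window whose description is implicit. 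Your final paragraph correctly identifies this as the one place where real work remains in your version: the error in \(1-h^2=\tfrac{2x}{d}(1+o(1))\) must be driven by the \(o(1)\) in \eqref{eq:n} after a \(z\)-th root, uniformly in \(y\) over a window whose width must in general be allowed to depend on \(\beta(d)\) for the far-tail bound to absorb \(n^d\).
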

\begin{remark} Taking into account both \Cref{thm:main} and the threshold established in \cite{BCGTT}, we can state that, for any sequences of natural numbers \(n=n(d)\) and real numbers \(\beta=\beta(d)\ge-1\),
\begin{equation*}
    \lim_{d\to\infty}\frac{\E\vol_d(\poly)}{\vol_d(\mathbb{B}^d)}=\lim_{d\to\infty}\exp\Bigl(-\frac{d}{2}\exp\Bigl(-\frac{2\log n}{d+2\beta} \Bigr)\Bigr),
\end{equation*}
whenever the right-hand side exists.
\end{remark}


The first consequence of \Cref{thm:main} that we mention is about the ratio of expected intrinsic volumes of \(\poly\).

\begin{corollary}
\label{cor:intrinsic}
    Under the same hypotheses of \Cref{thm:main} on \(x\) and \(\beta\), let \(k=k(d)\in\{1,\ldots, d\}\) be a diverging sequence of natural numbers as \(d\to\infty\) and \(n=n(d)\) be a sequence of natural numbers of the form
		\begin{equation}
			n=\Bigl(\frac{k}{2x+o(1)}\Bigr)^{d/2+\beta} ,
		\end{equation}
		where \(o(1)\) is a sequence converging to \(0\) as \(d\to\infty\). Then,
    \[
        \lim_{d\to\infty}\frac{\E V_k(\poly)}{ V_k (\mathbb{B}^d)}=e^{-x}.
    \]
\end{corollary}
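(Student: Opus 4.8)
The plan is to reduce the ratio of expected intrinsic volumes in dimension \(d\) to a ratio of expected \(k\)-dimensional volumes, and then to invoke \Cref{thm:main}. The main tool is Kubota's formula from integral geometry, which expresses the \(k\)-th intrinsic volume of a convex body \(K\subset\R^d\) as an average over the Grassmannian \(G(d,k)\) of \(k\)-dimensional linear subspaces of \(\R^d\):
\[
V_k(K)=\binom{d}{k}\frac{\kappa_d}{\kappa_k\kappa_{d-k}}\int_{G(d,k)}\vol_k(K\,|\,L)\,\nu_k(\de L),
\]
where \(K\,|\,L\) denotes the orthogonal projection of \(K\) onto \(L\) and \(\nu_k\) is the Haar probability measure on \(G(d,k)\). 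The second ingredient is the behaviour of beta distributions under orthogonal projection: if \(X\) is beta-distributed in \(\R^d\) with parameter \(\beta\ge-1\) and \(L\in G(d,k)\), then the projection of \(X\) onto \(L\cong\R^k\) is beta-distributed with parameter \(\beta+\frac{d-k}{2}\). For \(\beta=-1\) and \(k<d\) this parameter equals \(\frac{d-k}{2}-1>-1\), so the projection is a genuine beta distribution; this is an elementary computation, cf.\ \cite{kabluchko2019expected}.

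First I would apply these two facts to \(\poly\). Since orthogonal projection commutes with the convex hull operation, \((\poly)\,|\,L=\mathrm{conv}\{X_1\,|\,L,\ldots,X_n\,|\,L\}\), and the projected points \(X_1\,|\,L,\ldots,X_n\,|\,L\) are i.i.d.\ beta-distributed in \(L\) with parameter \(\beta'\coloneqq\beta+\frac{d-k}{2}\). Hence \((\poly)\,|\,L\) has the same distribution as \(P_{n,k}^{\beta'}\) viewed inside \(L\); in particular, by rotational invariance of the beta distribution, \(\E\vol_k((\poly)\,|\,L)=\E\vol_k(P_{n,k}^{\beta'})\) does not depend on \(L\). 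Applying Kubota's formula to \(K=\poly\), taking expectations, and using Tonelli's theorem to interchange \(\E\) with the integral over \(G(d,k)\) (the integrand is nonnegative), I obtain
\[
\E V_k(\poly)=\binom{d}{k}\frac{\kappa_d}{\kappa_k\kappa_{d-k}}\,\E\vol_k\bigl(P_{n,k}^{\beta'}\bigr).
\]
Applying Kubota's formula to \(K=\mathbb{B}^d\) and using \(\mathbb{B}^d\,|\,L=\mathbb{B}^k\), hence \(\vol_k(\mathbb{B}^d\,|\,L)=\kappa_k=\vol_k(\mathbb{B}^k)\), gives \(V_k(\mathbb{B}^d)=\binom{d}{k}\frac{\kappa_d}{\kappa_k\kappa_{d-k}}\,\vol_k(\mathbb{B}^k)\). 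Dividing, the combinatorial and volume prefactors cancel and
\[
\frac{\E V_k(\poly)}{V_k(\mathbb{B}^d)}=\frac{\E\vol_k\bigl(P_{n,k}^{\beta'}\bigr)}{\vol_k(\mathbb{B}^k)}.
\]

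It then remains to apply \Cref{thm:main} with the ambient dimension replaced by \(k\) and the parameter replaced by \(\beta'\). The hypotheses hold: \(k=k(d)\to\infty\) by assumption; \(\beta'=\beta+\frac{d-k}{2}\ge-1\) since \(\beta\ge-1\) and \(k\le d\); and, crucially, \(\frac{k}{2}+\beta'=\frac{d}{2}+\beta\), so the assumed representation \(n=\bigl(\frac{k}{2x+o(1)}\bigr)^{d/2+\beta}\) is exactly of the form \(n=\bigl(\frac{k}{2x+o(1)}\bigr)^{k/2+\beta'}\) required by \eqref{eq:n} in dimension \(k\). Thus \Cref{thm:main} yields \(\E\vol_k(P_{n,k}^{\beta'})/\vol_k(\mathbb{B}^k)\to e^{-x}\), which together with the displayed identity proves the corollary. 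The only genuinely delicate points are bookkeeping ones: verifying the projection formula for the beta distribution (including the value at the endpoint \(\beta=-1\), and observing that for \(k=d\) the statement is literally \Cref{thm:main}), and checking that \Cref{thm:main} may be applied along the sequence of dimensions \(k(d)\) — which is legitimate precisely because \(k(d)\to\infty\), so the conclusion of \Cref{thm:main}, holding uniformly over admissible \(\beta\)-sequences, transfers to the reindexed family \((k(d),n(d),\beta'(d))\). All the analytic work is already contained in the proof of \Cref{thm:main}; the corollary is a clean integral-geometric reduction.
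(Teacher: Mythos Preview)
Your proof is correct and follows essentially the same route as the paper. The paper obtains the key identity
\[
\E V_k(\poly)=\binom{d}{k}\frac{\kappa_d}{\kappa_k\kappa_{d-k}}\,\E\vol_k\bigl(P_{n,k}^{\beta'}\bigr),\qquad \beta'=\beta+\frac{d-k}{2},
\]
by quoting Proposition~2.3 of \cite{kabluchko2019expected} as a black box, whereas you rederive it from Kubota's formula together with the projection property of the beta distribution---which is precisely how that proposition is proved. After that, both arguments divide by \(V_k(\mathbb{B}^d)\), observe that \(k/2+\beta'=d/2+\beta\), and apply \Cref{thm:main} in dimension \(k\) with parameter \(\beta'\).
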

     The second consequence is an asymptotics for the expected number of vertices of the polytope that arises as the convex hull of points which are picked independently and uniformly at random inside of the ball.
\begin{corollary} \label{cor:vertices}
    Under the same assumptions as in  \Cref{thm:main}, fix $\beta=0$. Then the asymptotic behavior of the expected number of vertices $P_{n,d}^0$ is given by
    \begin{equation*}
        \E f_0(P_{n,d}^0) \sim ( 1-e^{-x} )\,n ,
    \end{equation*}
    as \(d\to\infty\).
\end{corollary}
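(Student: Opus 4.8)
The plan is to reduce the corollary to \Cref{thm:main} by means of Efron's identity, which for uniform points expresses the expected number of vertices of an $n$-point random polytope through the expected volume of the $(n-1)$-point one. The only regime issue will be to check that replacing $n$ by $n-1$ does not affect the hypothesis of \Cref{thm:main}, which is immediate because $n$ grows super-exponentially in $d$.

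First I would record the elementary geometric fact that, for a point $y\in\R^d$ and a finite set $T\subset\R^d$, the point $y$ is a vertex of $\mathrm{conv}(\{y\}\cup T)$ if and only if $y\notin\mathrm{conv}(T)$: if $y\in\mathrm{conv}(T)$ then $y$ is a proper convex combination of other points and hence not extreme, while if $y\notin\mathrm{conv}(T)$ a separating hyperplane exhibits $y$ as a vertex. Since almost surely the $X_i$ are pairwise distinct and in general position, $f_0(P_{n,d}^0)$ equals the number of indices $i$ with $X_i\notin\mathrm{conv}(\{X_j:j\ne i\})$, so by linearity of expectation and exchangeability
\[
\E f_0(P_{n,d}^0)=n\,\P\bigl(X_1\notin\mathrm{conv}(X_2,\dots,X_n)\bigr)=n\Bigl(1-\P\bigl(X_1\in\mathrm{conv}(X_2,\dots,X_n)\bigr)\Bigr).
\]
Next I would condition on $X_2,\dots,X_n$. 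For $\beta=0$ the point $X_1$ is uniform in $\ball$, and since $C\coloneqq\mathrm{conv}(X_2,\dots,X_n)\subseteq\ball$ almost surely, the conditional probability that $X_1\in C$ equals $\vol_d(C)/\kappa_d$. Taking expectations and noting that $C$ has the law of $P_{n-1,d}^0$ gives Efron's identity
\[
\E f_0(P_{n,d}^0)=n\Bigl(1-\frac{\E\vol_d(P_{n-1,d}^0)}{\kappa_d}\Bigr).
\]
This is the one place where $\beta=0$ is essential: only for the uniform distribution on $\ball$ does the sampling measure restrict to a multiple of $\vol_d$ on convex subsets of $\ball$.

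Finally I would verify that $P_{n-1,d}^0$ falls under \Cref{thm:main} with the same $x$. Writing $n=(d/(2x+o(1)))^{d/2}$, we have $d/(n-1)^{2/d}=\bigl(d/n^{2/d}\bigr)(1-1/n)^{-2/d}=(2x+o(1))(1+o(1))=2x+o(1)$, since $n$ grows faster than any power of $d$, so that $(1-1/n)^{-2/d}\to1$. Hence \Cref{thm:main} yields $\E\vol_d(P_{n-1,d}^0)/\kappa_d\to e^{-x}$, and therefore $\E f_0(P_{n,d}^0)=n\bigl(1-e^{-x}+o(1)\bigr)$. As $1-e^{-x}$ is a strictly positive constant for $x\in(0,\infty)$, the error term is negligible relative to it and we conclude $\E f_0(P_{n,d}^0)\sim(1-e^{-x})\,n$. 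I do not anticipate any real obstacle; the only points needing (routine) care are the almost-sure general-position argument that makes ``vertex'' unambiguous and the elementary asymptotic check that lowering $n$ by one leaves the regime unchanged.
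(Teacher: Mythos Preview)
Your proposal is correct and follows essentially the same route as the paper: Efron's identity reduces $\E f_0(P_{n,d}^0)/n$ to $1-\E\vol_d(P_{n-1,d}^0)/\kappa_d$, and then \Cref{thm:main} is applied. You actually supply more detail than the paper does, in particular the explicit verification that replacing $n$ by $n-1$ preserves the regime \eqref{eq:n}, which the paper leaves implicit.
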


\section{Proofs}
\label{sec:proofs}
In view of formula \eqref{eq:originalvolume}, it is convenient for our purposes to change the variables \(d\) and \(n\) in the following way:
	\begin{align*}
	    D&\coloneqq\frac{d+1}{2}\in\{3/2,2,\ldots\},\\
	    N&\coloneqq n-d-1\in\{0,1,\ldots\}.
	\end{align*}
This makes the constant in front of the integral \eqref{eq:originalvolume} become
\begin{equation} 
\label{eq:K}
    K\coloneqq K_{N+2D,2D-1}^\beta=4D(D+\beta)\binom{N+2D}{2D}\c_{D+\beta}^{2D}\,\kappa_d,
\end{equation}
while the integral itself is given by
\begin{equation*}
   \int_{-1}^1 (1-h^2)^{2D(D+\beta)-1}\F_{D+\beta-1}(h)^N\de h.
\end{equation*}
We study the behavior of integrals of the form
\begin{equation}
    \label{eq:Iab}
   \quad \I_a^b =\I_a^b(N,D,\beta) \coloneqq \int_a^b (1-h^2)^{2D(D+\beta)-1}\F_{D+\beta-1}(h)^N\de h,
\end{equation}
for \(a,b\in[-1,1]\). Indeed, \Cref{thm:beta0} can be reformulated as
	\begin{equation}
	\label{eq:reformulation}
	\E \vol_d (\poly)=K\, \I_{-1}^{+1}.
	\end{equation}
In order to prove Theorem \ref{thm:main}, we need to show that the limit \eqref{eq:main-lim} holds for any fixed $x\in(0,\infty)$, any sequence $\beta(d)\geq -1$ and any sequence $n(d)$ satisfying condition \eqref{eq:n}.
In a first step we show that it is enough to consider sequences $n(d)$ satisfying a more restrictive condition than \eqref{eq:n} which is stated in term of $N=n-d-1$ and $D=(d+1)/2$.
\begin{lemma}
\label{lem:reduction}
	It is sufficient to prove \Cref{thm:main} for all the integers \(N=N(D)\) such that
	\begin{equation*}
		N \sim \Bigl(\frac{D}{x}\Bigr)^{D+\beta} \sqrt{1+\frac{\beta}{D}},
	\end{equation*}
	as \(D\to\infty\), with \(x\in(0,+\infty)\) and \(\beta=\beta(D)\in[-1,+\infty)\).
\end{lemma}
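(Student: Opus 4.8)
The plan is to combine the evident monotonicity of $n\mapsto\E\vol_d(\poly)$ in the number of points with a squeezing argument. \textbf{Monotonicity:} for a fixed $d$ and $n'\ge n$, couple $P_{n,d}^{\beta}$ and $P_{n',d}^{\beta}$ by letting the second use the first $n$ points of the first; then $P_{n,d}^{\beta}\subseteq P_{n',d}^{\beta}$ almost surely, so $\E\vol_d(P_{n,d}^{\beta})\le\E\vol_d(P_{n',d}^{\beta})$, i.e.\ $n\mapsto\E\vol_d(\poly)$ is non-decreasing. Hence it suffices to trap an arbitrary sequence $n=n(d)$ obeying \eqref{eq:n} between two sequences of the restricted form, one with parameter slightly smaller than $x$ and one with parameter slightly larger.

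\textbf{The comparison sequences.} Fix $x\in(0,\infty)$, a sequence $\beta=\beta(d)\ge-1$ and a sequence $n=n(d)$ satisfying \eqref{eq:n}; recall $D=(d+1)/2$. Fix $\epsilon\in(0,x)$ and set
\[
  M^{\pm}(d):=\Bigl\lceil\bigl(\tfrac{D}{x\mp\epsilon}\bigr)^{D+\beta}\sqrt{1+\tfrac{\beta}{D}}\,\Bigr\rceil ,
\]
so $M^{+}$ is built from the smaller parameter $x-\epsilon$ and $M^{-}$ from the larger parameter $x+\epsilon$. Put $n^{+}(d):=\max\bigl(n(d),M^{+}(d)+d+1\bigr)$ and $n^{-}(d):=\min\bigl(n(d),M^{-}(d)+d+1\bigr)$; these are integers with $d<n^{-}(d)\le n(d)\le n^{+}(d)$ for every $d$, and we write $N^{\pm}:=n^{\pm}-d-1$.

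\textbf{The key estimate.} The core of the proof is the claim that, as $d\to\infty$,
\[
  \log\Biggl(\frac{n(d)}{\bigl(\tfrac{D}{x\mp\epsilon}\bigr)^{D+\beta}\sqrt{1+\beta/D}}\Biggr)=\frac{d+2\beta}{2}\Bigl(\log\tfrac{x\mp\epsilon}{x}+o(1)\Bigr),
\]
where the $o(1)$ absorbs the correction hidden inside \eqref{eq:n} together with elementary terms such as $\tfrac{\log(d+1)}{d+2\beta}$, $\tfrac1d$ and $\tfrac1{d+1}\log(1+\beta/D)$, each of which tends to $0$ uniformly over all admissible $\beta(d)$ (including $\beta\to\infty$ and $\beta\to-1$). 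This is a direct computation: writing $d=2D-1$ and rewriting \eqref{eq:n} as $\tfrac{2}{d+2\beta}\log n=\log d-\log(2x)+o(1)$, one expands both sides and isolates the dominant term $\tfrac{d+2\beta}{2}\log\tfrac{x\mp\epsilon}{x}$. Since $\log\tfrac{x-\epsilon}{x}<0<\log\tfrac{x+\epsilon}{x}$ are fixed constants and $d+2\beta\ge d-2\to\infty$, the displayed quantity tends to $-\infty$ for the upper sign and to $+\infty$ for the lower sign; equivalently $n(d)=o\bigl((\tfrac{D}{x-\epsilon})^{D+\beta}\sqrt{1+\beta/D}\bigr)$ while $(\tfrac{D}{x+\epsilon})^{D+\beta}\sqrt{1+\beta/D}=o\bigl(n(d)\bigr)$. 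Therefore $n^{+}(d)=M^{+}(d)+d+1$ and $n^{-}(d)=M^{-}(d)+d+1$ for all large $d$, so $N^{\pm}\sim\bigl(\tfrac{D}{x\mp\epsilon}\bigr)^{D+\beta}\sqrt{1+\beta/D}$; that is, $n^{+}$ and $n^{-}$ are sequences of the restricted form in the lemma, with the same $\beta$ and with parameters $x-\epsilon$ and $x+\epsilon$ respectively.

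\textbf{Conclusion and main obstacle.} Assuming \Cref{thm:main} has been established for all sequences of the restricted form, we get $\E\vol_d(P_{n^{-},d}^{\beta})/\vol_d(\ball)\to e^{-(x+\epsilon)}$ and $\E\vol_d(P_{n^{+},d}^{\beta})/\vol_d(\ball)\to e^{-(x-\epsilon)}$. Combining this with monotonicity and $n^{-}\le n\le n^{+}$ yields
\[
  e^{-(x+\epsilon)}\le\liminf_{d\to\infty}\frac{\E\vol_d(\poly)}{\vol_d(\ball)}\le\limsup_{d\to\infty}\frac{\E\vol_d(\poly)}{\vol_d(\ball)}\le e^{-(x-\epsilon)},
\]
and letting $\epsilon\downarrow0$ proves the lemma. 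The only genuine obstacle is the one flagged in the key estimate: one \emph{cannot} simply rewrite \eqref{eq:n} as the restricted condition $N\sim(D/x)^{D+\beta}\sqrt{1+\beta/D}$, because the $o(1)$ sitting inside the base $d/(2x+o(1))$ gets raised to the power $d/2+\beta\to\infty$ and need not vanish in the limit. Going through monotonicity with a \emph{fixed} perturbation $\epsilon$ is what repairs this, since then the gap between $n(d)$ and the two comparison sequences is exponentially large in $d+2\beta$ and swamps the sub-exponential error coming from the $o(1)$; checking that all the remaining error terms are $o(1)$ uniformly in the sequence $\beta(d)$ is the only step needing a little care.
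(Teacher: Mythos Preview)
Your proposal is correct and follows essentially the same route as the paper: both arguments use the monotonicity of $n\mapsto\E\vol_d(\poly)$ together with a squeeze between comparison sequences built from the parameters $x\pm\epsilon$, then let $\epsilon\downarrow0$. The paper first rewrites $N$ as $(D/(x+o(1)))^{D+\beta}\sqrt{1+\beta/D}$ before sandwiching, whereas you skip this intermediate step and pad with $\max/\min$ to guarantee the ordering for all $d$; these are cosmetic differences only.
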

\begin{proof}
    Assume that if \(N\sim D^{D+\beta}x^{-D-\beta}\sqrt{1+\beta/D}\) then, for \(d=2D-1\) and \(n=N+d+1\), it holds
   	\begin{equation*}
			\lim_{d\to\infty}\frac{\E\vol_d(\poly)}{\vol_d(\mathbb{B}^d)}=e^{-x}.
		\end{equation*}
    Notice that if \(n\) is as in equation \eqref{eq:n}, then 
    \[ N = \left(\frac{2D-1}{2x+o(1)}\right)^{D+\beta-1/2} -2D
    = \left(\frac{D}{x+o(1)}\right)^{D+\beta-1/2} , \]
    where the equalities above follow, respectively, from the definition of $N$ and $D$, and trivial simplifications. 
    Therefore
    \[ N = \left(\frac{D}{x+o(1)} \left( \sqrt{\frac{ x/D }{ 1+\beta/D } }\right)^{1/(D+\beta)} \right)^{D+\beta} \sqrt{1+\frac{\beta}{D}}
    = \left(\frac{D}{x+o(1)}\right)^{D+\beta} \sqrt{1+\frac{\beta}{D}} . \]
    Now, fix an arbitrary small constant \(\epsilon>0\). There exists \(D\) large enough such that:
        \begin{equation*}
            N^-\coloneqq \biggl\lfloor\Bigl(\frac{D}{x+\epsilon}\Bigr)^{D+\beta} \sqrt{1+\frac{\beta}{D}}\biggr\rfloor\le N \le \biggl\lceil\Bigl(\frac{D}{x-\epsilon}\Bigr)^{D+\beta} \sqrt{1+\frac{\beta}{D}}\biggr\rceil\eqqcolon N^+.
        \end{equation*}
    By assumption, since \(x+\epsilon\in(0,+\infty)\) is a fixed constant, if \(n^{-}\coloneqq N^{-}\!+d+1\) then
    \begin{equation*}
			\lim_{d\to\infty}\frac{\E\vol_d(P^{\beta}_{n^{-},d})}{\vol_d(\mathbb{B}^d)}=e^{-x-\epsilon}.
		\end{equation*}
    Analogously, using \(N^+\) yields \(e^{-x+\epsilon}\) as the limit. This allows us to deduce the claimed statement, because of the monotonicity of \(n\mapsto\E\vol_d(\poly)\) for any \(d\) fixed (note that \(\poly\subseteq P^{\beta}_{n+1,d}\) a.s.), the arbitrariness of \(\epsilon\) and the continuity of the exponential map.
\end{proof}
In view of \eqref{eq:reformulation} and \Cref{lem:reduction}, we want to show that
	\begin{equation}
	\label{eq:totallimit}
		\lim_{d\to\infty}\frac{K \I_{-1}^{+1}}{\kappa_d}=e^{-x},
	\end{equation}
whenever
	\begin{equation}
	\label{eq:N}
		N\sim\Bigl(\frac{D}{x}\Bigr)^{D+\beta}\sqrt{1+\frac{\beta}{D}}.
	\end{equation}
Our strategy is to find convenient \(a=a(d)\), \(b=b(d)\in(0,1)\) in such a way that 
	\begin{equation*}
			\lim_{d\to\infty}\frac{K \I_{-1}^{a}}{\kappa_d}= 0,\qquad\lim_{d\to\infty}\frac{K \I_{a}^{b}}{\kappa_d}= e^{-x}\qquad\lim_{d\to\infty}\frac{K \I_{b}^{1}}{\kappa_d}= 0.
	\end{equation*}
	
In order to do so, we first gather some useful facts in the next four Lemmas. We start with an inequality on the ratio of Gamma functions due to Wendel \cite{Wendel}.
	\begin{lemma}
	\label{lem:Gautschi}
	For \(z>0\) it holds that
	\begin{equation*}
	   \oneover{\sqrt{z+1/2}}\le \frac{\G(z+1/2)}{\G(z+1)}\le\oneover{\sqrt{z}}.
	\end{equation*}
	\end{lemma}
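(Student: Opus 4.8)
The plan is to prove the two-sided bound
\[
\frac{1}{\sqrt{z+1/2}} \le \frac{\G(z+1/2)}{\G(z+1)} \le \frac{1}{\sqrt{z}}
\]
for all $z>0$ by combining the log-convexity of the Gamma function with the functional equation $\G(w+1)=w\,\G(w)$. First I would recall that $\log\G$ is convex on $(0,\infty)$ (this is classical; it is part of the Bohr--Mollerup characterisation). Applying convexity to the point $z+1/2$, written as the midpoint $\tfrac12\bigl((z)+(z+1)\bigr)$, gives
\[
\log\G(z+1/2) \le \tfrac12\log\G(z) + \tfrac12\log\G(z+1),
\]
hence $\G(z+1/2)^2 \le \G(z)\G(z+1) = \G(z+1)^2/z$ after using $\G(z+1)=z\G(z)$. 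Taking square roots yields the upper bound $\G(z+1/2)/\G(z+1)\le 1/\sqrt z$.

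For the lower bound I would run the same idea shifted by a half-step: apply log-convexity to $z+1$ as the midpoint of $z+1/2$ and $z+3/2$, obtaining
\[
\G(z+1)^2 \le \G(z+1/2)\,\G(z+3/2) = \bigl(z+\tfrac12\bigr)\,\G(z+1/2)^2,
\]
where the last equality uses $\G(z+3/2)=(z+1/2)\G(z+1/2)$. Rearranging gives $\G(z+1/2)/\G(z+1)\ge 1/\sqrt{z+1/2}$, which is exactly the claimed lower bound. Together the two estimates establish the lemma.

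I do not anticipate a genuine obstacle here: the only input beyond elementary manipulation is the log-convexity of $\G$, which may simply be cited, and the functional equation. If one prefers to avoid invoking log-convexity explicitly, an alternative is to use Hölder's inequality on the Beta-integral representation $\Beta(a,b)=\int_0^1 t^{a-1}(1-t)^{b-1}\de t$ to get the same midpoint inequality for $\G$, but the log-convexity route is shorter. The main care needed is just bookkeeping the half-integer shifts correctly so that the functional equation produces the factors $z$ and $z+1/2$ in the right places.
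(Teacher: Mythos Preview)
Your argument is correct: applying the midpoint inequality from the convexity of $\log\Gamma$ at $z+1/2$ and at $z+1$, together with the functional equation, yields both bounds exactly as you wrote. The paper itself does not prove this lemma at all; it simply attributes the inequality to Wendel and states it without proof. So your proposal is not really a different route so much as a self-contained justification where the paper is content with a citation. (For what it's worth, Wendel's original argument uses H\"older's inequality on the Euler integral, which is essentially equivalent to the log-convexity step you invoke, so your approach is in the same spirit as the cited source.)
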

	By the previous lemma, 
	it holds
	\begin{equation*}
	\mathrm{c}_{D+\beta}\le \oneover{2\sqrt{\pi (D+\beta)}} 	\qquad\text{ and }\qquad 
	\mathrm{c}_{D+\beta}\sim\oneover{2\sqrt{\pi (D+\beta)}},	\end{equation*} as \(D\to\infty\). Notice that this implies that if \(N\) is chosen as in \Cref{lem:reduction}, then
	\begin{equation} \label{eq:NcDb}
	    2\sqrt{\pi} N\c_{D+\beta}\sim D^{D+\beta-\frac{1}{2}} x^{-D-\beta}.
	\end{equation}
	    
	\begin{lemma}
	    \label{lem:expineq}
	    For all \(m\geq1\) and all \(y\in(-\infty, m)\):
	    \begin{equation}
	    \label{eq:expineq}
	        \exp\Bigl(-\frac{y^2}{m-y}\Bigr)\le e^y\Bigl(1-\frac{y}{m}\Bigr)^m\le 1.
	    \end{equation}
        In particular, if \(y=y(m)\) is such that \(y^2/m\to 0\) as \(m\to\infty\), then
        \begin{equation}
            \label{eq:expasy}
            \Bigl(1-\frac{y}{m}\Bigr)^m\sim e^{-y},
        \end{equation}
    as \(m\to\infty\).
	\end{lemma}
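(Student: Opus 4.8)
The plan is to take logarithms and reduce the whole statement to one elementary single-variable inequality. Since the hypothesis $y<m$ guarantees $1-y/m>0$, passing to logarithms is legitimate. Writing $u\coloneqq y/m\in(-\infty,1)$ and $\phi(u)\coloneqq u+\log(1-u)$, one has the identity $e^y(1-y/m)^m=\exp\!\bigl(m\,\phi(u)\bigr)$, and moreover $\frac{y^2}{m-y}=m\cdot\frac{u^2}{1-u}$. Hence, after dividing the exponents by $m>0$, the bound \eqref{eq:expineq} is equivalent to
\[
-\frac{u^2}{1-u}\le\phi(u)\le 0,\qquad u\in(-\infty,1).
\]

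For the upper bound I would simply note that $\phi(0)=0$ and $\phi'(u)=1-\frac{1}{1-u}=-\frac{u}{1-u}$, which is $\ge 0$ on $(-\infty,0]$ and $\le 0$ on $[0,1)$; thus $\phi$ attains its maximum over $(-\infty,1)$ at $u=0$, so $\phi(u)\le\phi(0)=0$. (This is nothing but the standard inequality $\log(1-u)\le -u$.) For the lower bound, set $g(u)\coloneqq\phi(u)+\frac{u^2}{1-u}$, so that the claim becomes $g\ge 0$ on $(-\infty,1)$. Again $g(0)=0$; differentiating and putting everything over the common denominator $(1-u)^2$, a short computation yields the pleasant simplification
\[
g'(u)=1-\frac{1}{1-u}+\frac{2u-u^2}{(1-u)^2}=\frac{u}{(1-u)^2}.
\]
Thus $g'<0$ on $(-\infty,0)$ and $g'>0$ on $(0,1)$, so $g$ has a global minimum on $(-\infty,1)$ at $u=0$, whence $g(u)\ge g(0)=0$. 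Together the two bounds give \eqref{eq:expineq}.

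For the ``in particular'' statement, suppose $y^2/m\to 0$. Then also $(y/m)^2=(y^2/m)\cdot(1/m)\to 0$, so $y/m\to 0$, hence $m-y\sim m$ and therefore $\frac{y^2}{m-y}=\frac{y^2/m}{1-y/m}\to 0$. Consequently the left-hand side of \eqref{eq:expineq} tends to $e^0=1$, while the right-hand side is identically $1$; by the squeeze theorem $e^y(1-y/m)^m\to 1$, which is exactly the assertion $(1-y/m)^m\sim e^{-y}$ of \eqref{eq:expasy}.

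The only point requiring any care is the algebraic cancellation leading to $g'(u)=u/(1-u)^2$; everything else is a routine monotonicity argument, and one should just remember to record at the outset that the hypothesis $y<m$ is precisely what makes taking logarithms valid.
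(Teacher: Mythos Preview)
Your proof is correct and follows essentially the same route as the paper: after the substitution \(u=y/m\), both arguments reduce \eqref{eq:expineq} to the single-variable bound \(-\tfrac{u^2}{1-u}\le u+\log(1-u)\le 0\) on \((-\infty,1)\). The paper obtains this by quoting the classical inequality \(\tfrac{z}{1+z}\le\log(1+z)\le z\) (with \(z=-u\)) and then rewriting \(\tfrac{my}{m-y}=y+\tfrac{y^2}{m-y}\), whereas you prove the same two bounds directly by differentiating \(\phi\) and \(g\); your computation \(g'(u)=u/(1-u)^2\) checks out. The ``in particular'' part is handled identically in both versions.
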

	\begin{proof}
	    It is well known that
	    \begin{equation*}
	        \frac{z}{1+z}\le\log(1+z)\le z,\qquad z\in(-1,\infty).
	   \end{equation*}
	  We use it with \(z=-y/m>-1\). Applying the increasing mapping \(\exp(m\cdot)\) on both sides of the inequality we get:
	  \begin{equation*}
	       \exp\Bigl(-\frac{my}{m-y}\Bigr)\le\Bigl(1-\frac{y}{m}\Bigr)^m\le\exp(-y).
	   \end{equation*}
	 The fact that \(\frac{my}{m-y}=y+\frac{y^2}{m-y}\) yields the result.
	\end{proof}
\begin{lemma}
\label{lem:constant}
    For \(D\) large enough and \(N\) as in \Cref{lem:reduction}, \(\const\le (D/x)^{2D(D+\beta)}\).
\end{lemma}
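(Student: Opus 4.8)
The plan is a direct estimate of $\const$ from its explicit form \eqref{eq:K}, namely
\[
\const=\frac{K}{\kappa_d}=4D(D+\beta)\binom{N+2D}{2D}\c_{D+\beta}^{2D},
\]
combined with the facts already collected before the statement. First I would bound the binomial coefficient crudely by $\binom{N+2D}{2D}\le(N+2D)^{2D}/(2D)!$ and the factorial from below by $(2D)!\ge(2D/e)^{2D}$, which gives
\[
\binom{N+2D}{2D}\c_{D+\beta}^{2D}\le\Bigl(\frac{e\,(N+2D)\,\c_{D+\beta}}{2D}\Bigr)^{2D}.
\]

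Next I would get rid of the additive $2D$. Since $N\sim(D/x)^{D+\beta}\sqrt{1+\beta/D}$ grows faster than any fixed power of $D$ (as $D/x\to\infty$), we have $N+2D\le 2N$ for $D$ large, and by \eqref{eq:NcDb} also $2\sqrt{\pi}\,N\,\c_{D+\beta}\le 2\,D^{D+\beta-1/2}x^{-D-\beta}$ once $D$ is large. Substituting these and cancelling the powers of $D$ and $x$ yields, for $D$ large,
\[
\binom{N+2D}{2D}\c_{D+\beta}^{2D}\le\Bigl(\frac{e}{\sqrt{\pi}}\,D^{D+\beta-3/2}x^{-D-\beta}\Bigr)^{2D}=\Bigl(\frac{e^2}{\pi D^{3}}\Bigr)^{D}\Bigl(\frac{D}{x}\Bigr)^{2D(D+\beta)},
\]
so that $\const\le 4D(D+\beta)\bigl(\tfrac{e^2}{\pi D^{3}}\bigr)^{D}(D/x)^{2D(D+\beta)}$. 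It then remains to note that the scalar $4D(D+\beta)\bigl(\tfrac{e^2}{\pi D^{3}}\bigr)^{D}$ tends to $0$ as $D\to\infty$ — the super-exponentially small factor $(e^2/\pi D^3)^{D}$ beats the prefactor — so it is $\le 1$ for $D$ large, which is exactly the assertion.

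The delicate part is the bookkeeping of the factors depending on $D+\beta$: a priori these could accumulate and spoil the bound, but they do not, because the relation \eqref{eq:NcDb} already encodes their cancellation — the factor $(1+\beta/D)^{D}=(D+\beta)^{D}/D^{D}$ produced by the $\sqrt{1+\beta/D}$ in $N$ is exactly absorbed by the $(4\pi(D+\beta))^{-D}$ coming from Wendel's bound $\c_{D+\beta}\le(2\sqrt{\pi(D+\beta)})^{-1}$ of \Cref{lem:Gautschi}. Once \eqref{eq:NcDb} is invoked, the only surviving $\beta$-dependence is the benign prefactor $4D(D+\beta)$. I would also make sure that the two "$D$ large" substitutions above (that $N+2D\le 2N$, and the displayed upper bound for $2\sqrt{\pi}\,N\,\c_{D+\beta}$) are uniform in the sequence $\beta=\beta(D)$; this is automatic, since each is an immediate consequence of an asymptotic equivalence that holds for every sequence $\beta(D)\ge-1$.
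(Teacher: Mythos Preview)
Your argument is correct and follows essentially the same route as the paper: both bound the binomial coefficient by $(e(N+2D)/(2D))^{2D}$, apply Wendel's inequality to $c_{D+\beta}$, substitute the asymptotic form of $N$, and observe that the resulting prefactor in front of $(D/x)^{2D(D+\beta)}$ tends to zero. The only cosmetic differences are that you dispose of the additive $2D$ via $N+2D\le 2N$ where the paper writes $(1+2D/N)^{2D}=e^{o(1)}$, and that you invoke the packaged asymptotic \eqref{eq:NcDb} directly rather than substituting $N$ and $c_{D+\beta}$ separately.
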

\begin{proof} 
    Recall from \eqref{eq:K} that
    \[ \frac{K}{\kappa_d} = 4D(D+\beta)\binom{N+2D}{2D}\c_{D+\beta}^{2D} . \] 
    We start by bounding the binomial coefficient.
    Using the bounds \( \binom{k+\ell}{k} \leq (k+\ell)^k / k ! \) and \(k!\geq (k/e)^k\), with \(k=2D\) and \( \ell=N \), we note that 
    \begin{equation*}
        \binom{N+2D}{2D}\le \Bigl(\frac{e(N+2D)}{2D}\Bigr)^{2D} = \Bigl(\frac{e N}{2D}\Bigr)^{2D} e^{o(1)},
    \end{equation*}
    where the last equality is due to the RHS inequality of \cref{eq:expineq}. Indeed
    \begin{equation*}
        1 \leq \Bigl(\frac{N+2D}{N}\Bigr)^{2D} = \Bigl( 1 + \frac{o(1)}{2D}\Bigr)^{2D} \leq e^{o(1)} . 
    \end{equation*}
    Also by \Cref{lem:Gautschi}, we know that $c_{D+\beta} \leq 1/(2\sqrt{\pi (D+\beta)})$.
    
    Thus for \(N=(D/x)^{D+\beta}e^{o(1)}\sqrt{1+\beta/D}\) we get
    \begin{equation*}
    \begin{split}
		\frac{K}{\kappa_d} 
		& \leq 4D(D+\beta) \Bigl(\frac{e (D/x)^{D+\beta}e^{o(1)}}{4D \sqrt{\pi (D+\beta)}}\Bigr)^{2D} e^{o(1)} \\
		& =\Bigl(\frac{e^{1+o(1)}}{4D \sqrt{\pi}}\Bigr)^{2D} \frac{4D}{(D+\beta)^{D-1}}\Bigl(\frac{D}{x}\Bigr)^{2D(D+\beta)}
		\leq \Bigl(\frac{D}{x}\Bigr)^{2D(D+\beta)},
	\end{split}
    \end{equation*}
    where the last bound holds for $D$ large enough.
    This gives the conclusion.
\end{proof}
The following inequality provides a useful approximation of the functions \(\F_{z-1}\).
\begin{lemma} \label{lem:bounds}
	\label{eq:boundsF}
	For any \(h\in(0,1)\) and \(z>0\),
	\begin{equation*}
		1-\frac{1-h^2}{2h^2(z+1)}\le  \frac{h(1-\F_{z-1}(h))}{\c_z(1-h^2)^{z}}\le 1.
	\end{equation*}
\end{lemma}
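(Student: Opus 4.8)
The plan is to unfold the definition of $\F_{z-1}$ and reduce the claim to elementary estimates on the incomplete integral $I(h)\coloneqq\int_h^1(1-s^2)^{z-1}\de s$, $h\in(0,1)$. Since $\F_{z-1}(1)=1$, the definition of $\F_{z-1}$ gives $1-\F_{z-1}(h)=2z\,\c_z\,I(h)$, so the middle quantity in the statement equals $\dfrac{2zh\,I(h)}{(1-h^2)^z}$, and what has to be shown is the pair of inequalities $1-\dfrac{1-h^2}{2h^2(z+1)}\le\dfrac{2zh\,I(h)}{(1-h^2)^z}\le 1$.

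First I would record the elementary antiderivative identity: for every $w>0$ one has $\frac{\de}{\de s}(1-s^2)^w=-2ws(1-s^2)^{w-1}$, hence $\int_h^1 2ws(1-s^2)^{w-1}\de s=(1-h^2)^w$. The upper bound is then immediate, since on $[h,1]$ we have $s\ge h>0$, whence $2zh\,I(h)=\int_h^1 2zh(1-s^2)^{z-1}\de s\le\int_h^1 2zs(1-s^2)^{z-1}\de s=(1-h^2)^z$.

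For the lower bound I would estimate the defect $(1-h^2)^z-2zh\,I(h)=\int_h^1 2z(s-h)(1-s^2)^{z-1}\de s$. The key step is to write the integrand as $\frac{s-h}{s}\bigl(-\frac{\de}{\de s}(1-s^2)^z\bigr)$ and integrate by parts: the boundary terms vanish (at $s=h$ because $s-h=0$, at $s=1$ because $(1-s^2)^z=0$), and since $\frac{\de}{\de s}\frac{s-h}{s}=\frac{h}{s^2}$ this yields $\int_h^1 2z(s-h)(1-s^2)^{z-1}\de s=h\int_h^1\frac{(1-s^2)^z}{s^2}\de s$. Bounding $s^{-2}\le h^{-2}$ on $[h,1]$ and then applying the antiderivative identity with $w=z+1$ together with $s\ge h$ gives $h\int_h^1\frac{(1-s^2)^z}{s^2}\de s\le\frac1h\int_h^1(1-s^2)^z\de s\le\frac{(1-h^2)^{z+1}}{2h^2(z+1)}$. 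Dividing through by $(1-h^2)^z$ produces exactly the asserted lower bound on $\frac{2zh\,I(h)}{(1-h^2)^z}$.

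The only nonroutine point is the integration by parts in the last paragraph: a cruder estimate such as $s-h\le\frac{s^2-h^2}{2h}$ followed by $s^2-h^2\le 1-h^2$ loses a factor of order $z$ in the denominator and is too weak, so one genuinely needs to trade the vanishing factor $s-h$ for the weight $s^{-2}$ before estimating. Everything else is bookkeeping.
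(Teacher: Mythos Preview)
Your proof is correct. Both bounds are established cleanly: the upper one by the comparison $h\le s$ inside the integral, and the lower one via the integration-by-parts identity
\[
\int_h^1 2z(s-h)(1-s^2)^{z-1}\de s = h\int_h^1 \frac{(1-s^2)^z}{s^2}\de s,
\]
followed by two uses of $s\ge h$; the boundary terms vanish because $z>0$, so this is valid for all $z>0$ including the range $0<z<1$ where $(1-s^2)^{z-1}$ is unbounded near $s=1$.

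The paper proceeds differently. It applies the substitution $t=\frac{s^2-h^2}{1-h^2}$ to obtain
\[
\frac{h(1-\F_{z-1}(h))}{\c_z(1-h^2)^z}=z\int_0^1(1-t)^{z-1}\Bigl(1+\tfrac{1-h^2}{h^2}t\Bigr)^{-1/2}\de t,
\]
and then sandwiches the square-root factor between $1-\tfrac{1-h^2}{2h^2}t$ and $1$, reducing both bounds to the Beta-type evaluations $z\int_0^1(1-t)^{z-1}\de t=1$ and $z\int_0^1(1-t)^{z-1}t\de t=\tfrac{1}{z+1}$. The two routes are of comparable length and both extract the crucial $\tfrac{1}{z+1}$ gain by trading one power of $(1-s^2)$ (or $(1-t)$) for a Beta factor; the paper's substitution makes this mechanism explicit and would generalize more readily to higher-order corrections, whereas your integration-by-parts argument stays in the original variable and avoids introducing the square root altogether.
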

\begin{proof}
	Recall that for \(h\in(0,1)\), the function  \( \F_{z-1} (h) \) is defined by
	\[ \F_{z-1}(h) = 2z\,\c_{z} \int_{-1}^{h} (1-s^2)^{z-1}\de s \]
	and in particular for $h=1$ this simplifies as \( \F_{z-1}(1) = 1 \).
	Thus
	\[ \frac{1-\F_{z-1}(h)}{\c_{z}} 
	= 2 z \int_h^1 (1-s^2)^{z-1} \de s . \]
	We apply the substitution $t=\frac{s^2-h^2}{1-h^2}$ to get
	\[
	    \frac{1-\F_{z-1}(h)}{\c_{z}}
        = 2 z \int_0^1 [1-h^2- (1-h^2) t ]^{z-1} \frac{1-h^2}{2\sqrt{h^2 + (1-h^2) t }} \de t .
	\]
    Multiplying by $h / (1-h^2)^{z} $ gives
	\begin{equation} \label{aftersubstitution}
        \frac{h (1-\F_{z-1}(h) )}{\c_{z} (1-h^2)^{z} }
        = z \int_0^1 (1-t)^{z-1} \frac{1}{\sqrt{1 + \frac{1-h^2}{h^2} t }} \de t .
	\end{equation}
	The fraction in the last integrand makes the computation delicate and this is the step where we introduce the approximation given by
	\begin{equation} \label{PolyBounds}
      1 - \frac{1-h^2}{2 h^2} t 
      \leq \frac{1}{\sqrt{1 + \frac{1-h^2}{h^2} t }} 
      \leq 1 ,
      \qquad h,t \in (0,1).
    \end{equation}
	The lower bound follows from the fact that for any $ u \geq 0 $, $ (1+u)^{-1/2} \geq 1 - u/2 $.
	
	We get the upper bound of Lemma \ref{lem:bounds} by plugging the upper bound of \eqref{PolyBounds} in \eqref{aftersubstitution} and using the fact that
    $ \int_0^1 (1-t)^{z-1} \de t = 1/z $.
    
    It remains to show the lower bound of the lemma.
    Using the lower bound of \eqref{PolyBounds} and \eqref{aftersubstitution}, we get
    \begin{equation*}
        \frac{h (1-\F_{z-1}(h) )}{\c_z (1-h^2)^z }
        \ge z \int_0^1 (1-t)^{z-1} \de t - \frac{1-h^2}{2 h^2} z \int_0^1 (1-t)^{z-1} t  \de t.
	\end{equation*}
	The two last integrals evaluate nicely.
	As we have already mentioned, the first one equals $1/z$.
	The second one is the beta function $B(z,2) = \Gamma(z) \Gamma(2) / \Gamma(z+2) = 1 / [z(z+1)]$.
	The lower bound of lemma follows directly.
\end{proof}
We are now ready to prove the main result.
\begin{proof}[Proof of \Cref{thm:main}]
	Choosing \(z=D+\beta\) in \Cref{lem:bounds}, we get that for any \(a,b\in(0,1)\) with \(a<b\), and any \(h\in[a,b]\), it holds that
	\begin{equation*}
		1-A\c_{D+\beta}(1-h^2)^{D+\beta}\le \F_{D+\beta-1}(h)\le 1-B\c_{D+\beta} (1-h^2)^{D+\beta},
	\end{equation*}
	where \(A\coloneqq\oneover{a}\) and \(B\coloneqq \oneover{b}(1-\frac{1-a^2}{2a^2 (D+\beta)})\).
	This inequality can be used to bound \(\I_a^b\) on both sides. On the left-hand side we perform the change of variable 
	\begin{equation*}
		t=NA\c_{D+\beta}(1-h^2)^{D+\beta}.
	\end{equation*}
	The Jacobian of this transformation is such that
	\begin{equation*}
	   (1-h^2)^{2D(D+\beta)-1}\de h = -\frac{(1-h^2)^{(2D-1)(D+\beta)}}{2h(D+\beta)NA\c_{D+\beta}}\de t=
	   \frac{t^{2D-1}}{-2h(D+\beta)(NA\c_{D+\beta})^{2D}}\de t .
	\end{equation*}
	We perform an analogous change of variable on the right-hand side, this time replacing \(A\) by \(B\).  We also use \(1/b\le 1/h\le 1/a\). Hence, we get
	\begin{multline*}
		\frac{1}{2b(D+\beta) (NA\c_{D+\beta})^{2D}}\int_{NA\c_{D+\beta}(1-b^2)^{D+\beta}}^{NA\c_{D+\beta}(1-a^2)^{D+\beta}} t^{2D-1}\Bigl(1-\frac{t}{N}\Bigr)^N\de t\\
		\le \I_a^b \le \\
			\frac{1}{2a(D+\beta) (NB\c_{D+\beta})^{2D}}\int_{NB\c_{D+\beta}(1-b^2)^{D+\beta}}^{NB\c_{D+\beta}(1-a^2)^{D+\beta}} t^{2D-1}\Bigl(1-\frac{t}{N}\Bigr)^N\de t,
	\end{multline*}
	therefore, multiplying all terms by \(K/\kappa_d\), we get
	\begin{multline}
	\label{eq:step}
			\frac{1}{bA^{2D}}\binom{N+2D}{2D}\frac{2D}{N^{2D}}\int_{NA\c_{D+\beta}(1-b^2)^{D+\beta}}^{NA\c_{D+\beta}(1-a^2)^{D+\beta}} t^{2D-1}\Bigl(1-\frac{t}{N}\Bigr)^N\de t\\
		\le \frac{K\I_a^b}{\kappa_d} \le \\
		\frac{1}{aB^{2D}}\binom{N+2D}{2D}\frac{2D}{N^{2D}}\int_{NB\c_{D+\beta}(1-b^2)^{D+\beta}}^{NB\c_{D+\beta}(1-a^2)^{D+\beta}} t^{2D-1}\Bigl(1-\frac{t}{N}\Bigr)^N\de t.
	\end{multline}
	By \Cref{lem:expineq} and the fact that the mapping \(t\mapsto 	e^{-\frac{t^2}{N-t}}\) is decreasing for \(t\in(0,N)\), we get that for  \(t\in[NA\c_{D+\beta}(1-b^2)^{D+\beta},NA\c_{D+\beta}(1-a^2)^{D+\beta}]\),
	\begin{equation*}
	r_{N,D,a}\coloneqq \exp\Bigl(-N\cdot\frac{A^2\c_{D+\beta}^2(1-a^2)^{2(D+\beta)}}{1-A\c_{D+\beta}(1-a^2)^{D+\beta}}\Bigr) \le e^{t} \Bigl(1-\frac{t}{N}\Bigr)^N\le 1,
	\end{equation*}
	provided that \(A\c_{D+\beta}(1-a^2)^{D+\beta}<1\), which will be justified later with a particular choice of \(a\) and \(b\).
	
	Note that
	\begin{equation*}
	   1\le (2D-1)!\binom{N+2D}{2D}\frac{2D}{N^{2D}}=\frac{(N+2D)!}{N! N^{2D}}\le \Bigl(1+\frac{2D}{N}\Bigr)^{2D}=:s_{N,d}\,.
	\end{equation*}
    Hence, we get from \eqref{eq:step} 
	\begin{multline}
	\label{eq:integralbounds}
		\frac{r_{N,D,a}}{bA^{2D}}\oneover{(2D-1)!}\int_{NA\c_{D+\beta}(1-b^2)^{D+\beta}}^{NA\c_{D+\beta}(1-a^2)^{D+\beta}} t^{2D-1}e^{-t}\de t\\
			\le \frac{K\I_a^b}{\kappa_d} \le \\
		\frac{s_{N,D}}{aB^{2D}}\oneover{(2D-1)!}\int_{NB\c_{D+\beta}(1-b^2)^{D+\beta}}^{NB\c_{D+\beta}(1-a^2)^{D+\beta}} t^{2D-1}e^{-t}\de t.
	\end{multline}
	Now note that \(t^{2D-1}e^{-t}/(2D-1)!\) is the probability density function of random variable with a \(\G(2D,1)\) probability distribution, hence also of a sum of \(2D\) independent copies \((E_i)_{i=1}^{2D}\) of an exponentially distributed random variable with mean \(1\). Therefore, by the weak law of large numbers, 
	\begin{equation*}
	\begin{split}
		&\lim_{D\to\infty} \oneover{(2D-1)!}\int_{NA\c_{D+\beta}(1-b^2)^{D+\beta}}^{NA\c_{D+\beta}(1-a^2)^{D+\beta}} t^{2D-1}e^{-t}\de t\\
			&\qquad\qquad=\lim_{D\to\infty}\P\Bigl(NA\c_{D+\beta}(1-b^2)^{D+\beta}\le \sum_{i=1}^{2D} E_i\le NA\c_{D+\beta}(1-a^2)^{D+\beta} \Bigr)=1
	\end{split}
	\end{equation*}
	granted that there exists an \(\epsilon>0\) such that for \(D\) large enough
	\begin{equation}
	\label{eq:conditionWLLN}
	\begin{split}
		NA\c_{D+\beta}(1-a^2)^{D+\beta}&>(1+\epsilon)2D,\\
		NA\c_{D+\beta}(1-b^2)^{D+\beta}&<(1-\epsilon)2D.
	\end{split}
	\end{equation}
	Now assume without loss of generality that \(D>x\) and define the following quantities in \((0,1)\):
	\begin{equation}
	\label{eq:precisebell}
	\begin{split}
		a&\coloneqq \sqrt{1-\frac{x}{D}\Bigl(1+\frac{\log\bigl( D^2(D+\beta)\bigr)}{D+\beta}\Bigr)},\\
		b&\coloneqq  \sqrt{1-\frac{x}{D}}.
	\end{split}
	\end{equation}
	Since \(a^{2D}= \bigl(1-\frac{x}{D}\bigl(1+\frac{\log( D^2(D+\beta))}{D+\beta}\bigr)\bigr)^D\) and \(x\bigl(1+\frac{\log( D^2(D+\beta))}{D+\beta}\bigr)\to x\) as \(D\to\infty\) then, by \eqref{eq:expasy},
	\begin{equation*}
	a^{2D}\sim e^{-x}.
	\end{equation*}
	Analogously, \(b^{2D}\sim e^{-x}, \) as \(D\to\infty\).
	Moreover
	\[
	(1-a^2)^{D+\beta}=\Bigl(\frac{x}{D} \Bigr)^{D+\beta}\Bigl(1+\frac{\log\bigl( D^2(D+\beta)\bigr)}{D+\beta}\Bigr)^{D+\beta}.
	\]
	Using \(m=D+\beta\) and \(y=-\log\bigl(D^2(D+\beta)\bigr)\) in \Cref{lem:expineq}, since \(y^2/m\to 0\) as \(D\to\infty\), we get
	\[
	\Bigl(1+\frac{\log\bigl( D^2(D+\beta)\bigr)}{D+\beta}\Bigr)^{D+\beta}\sim e^{\log (D^2(D+\beta))}=D^2(D+\beta),
	\]
	as \(D\to\infty\). 
	
	Summing up what we just computed above, we get for \(D\to\infty\),
	\begin{equation}
		\label{eq:importantasy}
	    \begin{split}
	(1-a^2)^{D+\beta}&\sim x^{D+\beta} D^{2-D-\beta}(D+\beta),\\
	(1-b^2)^{D+\beta}&=  x^{D+\beta} D^{-D-\beta}.
	    \end{split}
	\end{equation}
	Recall that \(A=1/a\to 1\) and \(N\c_{D+\beta}\sim D^{D+\beta-\frac{1}{2}} /(2\sqrt{\pi}x^{D+\beta}) \), as noted in equation \eqref{eq:NcDb}. Thus,  
	\begin{equation}
	    \label{eq:NAcDb}
	    \begin{split}
	    NA\c_{D+\beta}(1-a^2)^{D+\beta}&\sim \frac{1}{2\sqrt{\pi}} D^{3/2}(D+\beta),\\
    	NA\c_{D+\beta}(1-b^2)^{D+\beta}&\sim \frac{1}{2\sqrt{\pi}} D^{-1/2},
    	\end{split}
	\end{equation}
	and, in particular, the assumptions \eqref{eq:conditionWLLN} are verified. Note also that we see that we also verified that \(A c_{D+\beta} (1 - h^2)^{D+\beta}<1\) as previously claimed.
	
	Moreover, in such a case, as \(D\to\infty\),
	\begin{equation*}
	r_{N,D,a}	A^{-2D}= r_{N,D,a} a^{2D} \sim 1\cdot e^{-x}= e^{-x},
	\end{equation*}
	where the asymptotics \( r_{N,D,a}\sim 1\) is a consequence of \eqref{eq:importantasy}, indeed by definition \(-\log r_{N,D,a}\!=NA^2\c_{D+\beta}^2(1-a^2)^{2(D+\beta)}/(1-A\c_{D+\beta}(1-a^2)^{D+\beta})\to 0\).
	
	We proved that if assumptions \eqref{eq:N} and \eqref{eq:precisebell} are satisfied then the left-hand side of \eqref{eq:integralbounds} tends to \(e^{-x}\). It is a simple check to see that everything holds in the same way for the right-hand side, since we only replace \(A\) by \(B\). Indeed, as far as the prefactor of the integral is concerned,
		\begin{equation*}
	s_{N,D}B^{-2D}=\Bigl(1+\frac{2D}{N}\Bigr)^{2D}b^{2D}\Bigl(1-\frac{1-a^2}{2a^2(D+\beta)}\Bigr)^{-2D}\sim 1\cdot e^{-x}\cdot 1=e^{-x},
	\end{equation*} 
	since \(\frac{1-a^2}{2a^2}=o(1)\). Hence, both the upper and the lower bound for \(K\I_a^b/\kappa_d\) have the same asymptotics, which allows to conclude that
	
	\begin{equation*}
	\frac{K\I_a^b}{\kappa_d}\sim e^{-x},
	\end{equation*}
	as \(d\to \infty\). 
	
    With Lemmas \ref{lem:a} and \ref{lem:b} below we have that under the same conditions, 
	\begin{equation*}
	\frac{K\I_{-1}^a}{\kappa_d}\to 0\quad\text{and}\quad 	\frac{K\I_b^1}{\kappa_d}\to 0,
	\end{equation*}
as \(d\to\infty\), which yields equation \eqref{eq:totallimit}.
\end{proof}
\begin{lemma} \label{lem:a}
		If \(a\) is chosen as in \eqref{eq:precisebell}, then \(\frac{K\I_{-1}^a}{\kappa_d}\to 0\) as \(d\to\infty\).
\end{lemma}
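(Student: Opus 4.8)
The plan is to estimate $\I_{-1}^a$ in the crudest way that still works, exploiting the fact that $\F_{D+\beta-1}(a)^N$ is so small that it swamps the (super-exponentially large) prefactor $K/\kappa_d$. Since $\F_{D+\beta-1}$ is increasing on $[-1,1]$ --- its derivative is $2(D+\beta)\c_{D+\beta}(1-h^2)^{D+\beta-1}>0$ --- and $(1-h^2)^{2D(D+\beta)-1}\le 1$ there, the first step is simply
\[
\I_{-1}^a \;\le\; \F_{D+\beta-1}(a)^N\int_{-1}^a (1-h^2)^{2D(D+\beta)-1}\de h \;\le\; 2\,\F_{D+\beta-1}(a)^N .
\]

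Next I would bound $\F_{D+\beta-1}(a)$ from above using the left-hand inequality of \Cref{lem:bounds} with $z=D+\beta$ and $h=a$: since $a\in(0,1)$ and $1-a^2\to 0$, for $D$ large one has $1/a\ge 1$ and $1-\tfrac{1-a^2}{2a^2(D+\beta+1)}\ge\tfrac12$, whence
\[
\F_{D+\beta-1}(a)\le 1-\frac1a\Bigl(1-\frac{1-a^2}{2a^2(D+\beta+1)}\Bigr)\c_{D+\beta}(1-a^2)^{D+\beta}\le 1-\tfrac12\c_{D+\beta}(1-a^2)^{D+\beta}.
\]
Combining $1-t\le e^{-t}$ with the asymptotics $N\c_{D+\beta}(1-a^2)^{D+\beta}=a\,NA\c_{D+\beta}(1-a^2)^{D+\beta}\sim\tfrac1{2\sqrt\pi}D^{3/2}(D+\beta)$, which follows from the first line of \eqref{eq:NAcDb} together with $A=1/a\to 1$, I obtain, for a suitable constant $c>0$ and all $D$ large,
\[
\F_{D+\beta-1}(a)^N\le\exp\Bigl(-\tfrac N2\c_{D+\beta}(1-a^2)^{D+\beta}\Bigr)\le\exp\bigl(-c\,D^{3/2}(D+\beta)\bigr).
\]

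Finally I would plug in \Cref{lem:constant}, namely $K/\kappa_d\le(D/x)^{2D(D+\beta)}=\exp\bigl(2D(D+\beta)\log(D/x)\bigr)$, to arrive at
\[
\frac{K\I_{-1}^a}{\kappa_d}\le 2\exp\Bigl(D(D+\beta)\bigl(2\log(D/x)-c\,D^{1/2}\bigr)\Bigr).
\]
Since $\log(D/x)=o(\sqrt D)$ the bracket is eventually negative, while $D(D+\beta)\ge D(D-1)\to\infty$, so the exponent tends to $-\infty$ and $K\I_{-1}^a/\kappa_d\to 0$, as claimed. The only delicate point is this last comparison: one must check that the super-exponential decay of $\F_{D+\beta-1}(a)^N$ dominates the super-exponential growth of $K/\kappa_d$ \emph{uniformly} over the possibly very fast growing sequence $\beta=\beta(D)$. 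This works because both exponents carry the common factor $D(D+\beta)$, so the whole comparison reduces to the harmless inequality $\log D\ll\sqrt D$; in particular one may afford to throw away all of the decay coming from $(1-h^2)^{2D(D+\beta)-1}$, exactly as was done in the very first step.
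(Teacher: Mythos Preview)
Your argument is correct and follows essentially the same route as the paper's proof: bound $\I_{-1}^a$ by $\F_{D+\beta-1}(a)^N$ times a harmless integral, then use the lower bound of \Cref{lem:bounds} together with \eqref{eq:NAcDb} to turn $\F_{D+\beta-1}(a)^N$ into $\exp(-c\,D^{3/2}(D+\beta))$, and finally absorb the prefactor via \Cref{lem:constant}. The only cosmetic difference is that the paper evaluates $\int_{-1}^1(1-h^2)^{2D(D+\beta)-1}\de h$ as a Beta integral and bounds it by~$1$ using \Cref{lem:Gautschi}, whereas you simply bound the integrand by~$1$ and the interval length by~$2$; either works.
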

\begin{proof}
	Since \(\F_{D+\beta-1}\) is increasing, then
	\begin{equation*}
	\begin{split}
		\I_{-1}^a&\le \int_{-1}^1(1-h^2)^{2D(D+\beta)-1}\de h\, \F_{D+\beta-1}(a)^N\\ &=\sqrt{\pi}\frac{\G(2D(D+\beta))}{\G(2D(D+\beta)+1/2)}\F_{D+\beta-1}(a)^N\le \F_{D+\beta-1}(a)^N,
	\end{split}
	\end{equation*}
where the last equation holds for every \(D\) large enough, due to \Cref{lem:Gautschi}.
By \Cref{eq:boundsF}, 
	\begin{equation*}
		\F_{D+\beta-1}(a)\le 1-\frac{\c_{D+\beta}}{a}(1-a^2)^{D+\beta}\Bigl(1-\frac{1-a^2}{2a^2(D+\beta+1)}\Bigr)
	\end{equation*}
	so that
	\begin{equation}
	\label{eq:boundFN}
			\F_{D+\beta-1}(a)^N\le \exp\Bigl(-N\frac{\c_{D+\beta}}{a}(1-a^2)^{D+\beta}\Bigl(1-\frac{1-a^2}{2a^2(D+\beta+1)}\Bigr)\Bigr).
	\end{equation}
	Recall that $a\to 1$, thus \eqref{eq:NAcDb} gives us that
	\begin{equation*}
	N\frac{\c_{D+\beta}}{a}(1-a^2)^{D+\beta}\Bigl(1-\frac{1-a^2}{2a^2(D+\beta+1)}\Bigr)\sim \frac{1}{2\sqrt{\pi}}D^{3/2}(D+\beta) .
	\end{equation*}
Combining the above estimates with the bound of \(K/\kappa_d\) obtained in \Cref{lem:constant}, we get that, for an arbitrary positive constant $c$ less than $1/(2\sqrt{\pi})$ and $D$ large enough,
	\[ \frac{K \I_{-1}^a}{\kappa_d} \leq 
	\exp \Bigl( - D (D+\beta) \Bigl( c D^{1/2} - 2 \log \Bigl(\frac{D}{x}\Bigr) \Bigr) \Bigr) , \]
	which goes to zero. This concludes the proof.
	\end{proof}
\begin{lemma} \label{lem:b}
	If \(b\) is chosen as in \eqref{eq:precisebell}, then \(\frac{K\I_b^1}{\kappa_d}\to 0\) as \(d\to\infty\).
\end{lemma}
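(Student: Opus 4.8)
The plan is to exploit the fact that, in contrast with \Cref{lem:a}, the factor $\F_{D+\beta-1}(h)$ is of no help on the interval $[b,1]$: since $\F_{D+\beta-1}$ is increasing with $\F_{D+\beta-1}(1)=1$, the best available bound there is simply $\F_{D+\beta-1}(h)\le 1$. The entire decay must therefore come from $(1-h^2)^{2D(D+\beta)-1}$, which is small on $[b,1]$ both because the interval is short and because $1-h^2\le 1-b^2=x/D$ is small there. So first I would reduce, using $\F_{D+\beta-1}(h)\le 1$, to
\[ \I_b^1 \le \int_b^1 (1-h^2)^{2D(D+\beta)-1}\de h . \]

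For the remaining integral the key observation is that $h\ge b$ on $[b,1]$, hence $1\le h/b$, so it is advantageous to insert this factor and integrate the resulting exact derivative:
\[ \int_b^1 (1-h^2)^{2D(D+\beta)-1}\de h \le \frac{1}{2b}\int_b^1 2h\,(1-h^2)^{2D(D+\beta)-1}\de h = \frac{(1-b^2)^{2D(D+\beta)}}{4\,b\,D(D+\beta)} . \]
Substituting $1-b^2=x/D$ from \eqref{eq:precisebell}, the right-hand side becomes $(x/D)^{2D(D+\beta)}/\bigl(4bD(D+\beta)\bigr)$.

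It then remains to combine this with the bound $K/\kappa_d\le (D/x)^{2D(D+\beta)}$ from \Cref{lem:constant}: the two powers cancel exactly, leaving
\[ \frac{K\I_b^1}{\kappa_d} \le \frac{1}{4\,b\,D(D+\beta)} , \]
which tends to $0$ since $b=\sqrt{1-x/D}\to 1$ and $D(D+\beta)\to\infty$ as $d\to\infty$. There is no real obstacle here; the one point needing (mild) care is the integral estimate, since a crude bound by (length of $[b,1]$) $\times$ (maximal value of the integrand) only yields $K\I_b^1/\kappa_d\le \mathrm{const}$. Integrating $2h(1-h^2)^{2D(D+\beta)-1}$ exactly is precisely what supplies the extra factor $2D(D+\beta)$ in the denominator that forces the ratio to vanish.
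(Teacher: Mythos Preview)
Your proof is correct and essentially identical to the paper's: both bound $\F_{D+\beta-1}\le 1$, estimate $\int_b^1(1-h^2)^{2D(D+\beta)-1}\de h\le (1-b^2)^{2D(D+\beta)}/\bigl(4bD(D+\beta)\bigr)$, substitute $1-b^2=x/D$, and cancel against the bound of \Cref{lem:constant}. The only cosmetic difference is that the paper obtains the integral estimate via the substitution $s=1-h^2$ and the bound $1/\sqrt{1-s}\le 1/b$, which is exactly your insertion of $1\le h/b$ written in the new variable.
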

	\begin{proof}
		We bound \(\F_{D+\beta-1}\le 1\) so that 
		\begin{equation*}
		\begin{split}
			\I_b^1&\le\int_b^1 (1-h^2)^{2D(D+\beta)-1}\de h=\int_0^{1-b^2}\frac{s^{2D(D+\beta)-1}}{2\sqrt{1-s}}\de s\\
			&\le\frac{(1-b^2)^{2D(D+\beta)}}{4bD(D+\beta)}
			= \oneover{4bD(D+\beta)}\Bigl(\frac{x}{D}\Bigr)^{2D(D+\beta)},
		\end{split}
		\end{equation*}
			where we used the fact that \(1-b^2= x/D\).
	By \Cref{lem:constant} it follows that
		\begin{equation*}
			\const	\I_b^1\le\oneover{4bD(D+\beta)},
		\end{equation*}
	which tends to \(0\) as \(D\) tends to infinity.
		\end{proof}
\begin{proof}[Proof of \Cref{cor:intrinsic}]
    First of all, note that from the definition of intrinsic volume we get that 
    \[
        V_k(\mathbb{B}^d)=\binom{d}{k}\frac{\kappa_d}{\kappa_{d-k}},
    \]
    which can be deduced by plugging \(K=\mathbb{B}^d\) into \eqref{eq:defintrinsic}.
    Moreover, Proposition 2.3 in \cite{kabluchko2019expected} states that
    
    \[
    \E V_k(P_{n,d}^\beta)=\binom{d}{k}\frac{\kappa_d}{\kappa_k\kappa_{d-k}}\E \vol_k(P_{n,k}^{\beta'}),
    \]
    with \(\beta'\coloneqq\frac{d-k}{2}+\beta\). Together with the previous identity this implies that
    \[
        \frac{\E V_k(\poly)}{ V_k (\mathbb{B}^d)}=  \frac{\E \vol_{d'}(P_{n,d'}^{\beta'})}{\vol_{d'} (\mathbb{B}^{d'})},
    \]
    with \(d'\coloneqq k\). We can thus apply \Cref{thm:main} to the RHS with
    \[
        	n=\Bigl(\frac{d'}{2x+o(1)}\Bigr)^{d'\!/2+\beta'}=\Bigl(\frac{k}{2x+o(1)}\Bigr)^{d/2+\beta},
    \]
    which proves the claim, since \(d'\) also diverges by hypotheses.
\end{proof}
\begin{proof}[Proof of \Cref{cor:vertices}]
    The expected number of vertices of \(P_{n,d}^0 \) is given by \[
    \E f_0(P_{n,d}^0)=n\,\P(X_n \text{ is a vertex of }P_{n,d}^0).\]
    Observe that the latter event is the complement of $\{ X_n \in P_{n-1,d}^0\}$, whose probability is $\E \vol_d (P_{n-1,d}^0) / \vol_d (\ball)$ because $X_n$ is uniformly distributed, since $\beta=0$.
    Collecting these observations provides the so-called Efron's identity, see \cite{efron1965convex},
    \[ \frac{\E f_0(P_{n,d}^0)}{n} = 1 -  \frac{\E \vol_d (P_{n-1,d}^0)}{\vol_d (\ball)} . \]
    Combining this equation with the conclusion of Theorem \ref{thm:main} yields the proof.
\end{proof}
\section*{Appendix: Random simplices have small volumes}
In this section, we justify the claim made in the introduction that 
\begin{equation*}
    \lim_{d\to\infty}\frac{ \E \vol_d (P_{d+1,d}^{\mu})}{ \vol_d ( \mathrm{conv}(\mathrm{supp} (\mu))) } = 0 ,
\end{equation*}
independently of the choice of probability measures $\mu = \mu(d)$.

\begin{lemma}
For all natural numbers \(n> d\),
\begin{equation}
\label{eq:Mnd}
    M_{n,d} \coloneqq \sup \biggl\{ \frac{ \E \vol_d (P_{n,d}^{\mu})}{ \vol_d ( \mathrm{conv}(\mathrm{supp} (\mu))) } \biggr\}
    \le \frac{1}{2^{n-1}} \sum_{k=d}^{n-1} \binom{n-1}{k},
\end{equation}
where the supremum is taken on the set of all probability measures \(\mu\) in $\R^d$ whose support is compact and not contained in a hyperplane.
In particular, we see immediately that
\begin{equation*}
    M_{d+1,d} \le 2^{-d},
\end{equation*}
\end{lemma}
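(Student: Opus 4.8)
The plan is to bound the expected volume ratio uniformly over all admissible measures $\mu$ by exploiting the fact that a random point $P_{n,d}^\mu$ lies inside $\mathrm{conv}(\mathrm{supp}(\mu))$ and estimating the probability that an extra point falls outside the convex hull of the previous ones. Concretely, write $C_\mu \coloneqq \mathrm{conv}(\mathrm{supp}(\mu))$ and fix $n>d$. The starting observation is an Efron-type identity: since $X_{n}$ is independent of $X_1,\dots,X_{n-1}$,
\[
\frac{\E\vol_d(P_{n,d}^\mu)}{\vol_d(C_\mu)}
= \E\!\left[\frac{\vol_d(\mathrm{conv}\{X_1,\dots,X_n\})}{\vol_d(C_\mu)}\right].
\]
Rather than controlling the volume directly, I would relate $\vol_d(P_{n,d}^\mu)$ to the number of points of $P_{n-1,d}^\mu$ that are \emph{not} vertices, or more efficiently use the classical identity expressing $\E\vol_d(P_{n,d}^\mu)$ via the probability that a $(d+1)$-subset of the $X_i$ has the remaining points on one side. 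The cleanest route is: pick an independent uniform point $Y$ in $C_\mu$; then $\vol_d(P_{n,d}^\mu)/\vol_d(C_\mu) = \Prob{Y\in P_{n,d}^\mu}$, and $Y\in P_{n,d}^\mu$ forces that $Y$ is \emph{not} separated from $\{X_1,\dots,X_n\}$ by any hyperplane.

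**Next I would** set up the key combinatorial estimate. Consider the $n+1$ points $X_1,\dots,X_n,Y$ (with $Y$ uniform in $C_\mu$, independent of the rest). If $Y\notin P_{n,d}^\mu$, there is a hyperplane strictly separating $Y$ from all the $X_i$. Dualizing / using a Wendel-type argument: condition on the directions and use that for a "generic" configuration the number of points of $\{X_1,\dots,X_n\}$ lying in an open halfspace determined by a hyperplane through $Y$ behaves like a sum of Bernoulli variables. More precisely, Wendel's theorem states that for $N$ points in general position which are symmetric about a fixed point, the probability that they all lie in some halfspace with that point on the boundary is $2^{-(N-1)}\sum_{k=0}^{d-1}\binom{N-1}{k}$. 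Here one wants the complementary statement: $Y\in\mathrm{conv}\{X_1,\dots,X_n\}$ fails iff the points $X_i - Y$ all lie in an open halfspace, and after symmetrizing (reflecting each $X_i-Y$ through the origin with probability $1/2$, which only increases the chance of being contained in a halfspace) one gets
\[
\Prob{Y\notin P_{n,d}^\mu \mid Y} \le \frac{1}{2^{n-1}}\sum_{k=0}^{d-1}\binom{n-1}{k},
\]
hence
\[
\frac{\E\vol_d(P_{n,d}^\mu)}{\vol_d(C_\mu)} = \Prob{Y\in P_{n,d}^\mu} \ge 1 - \frac{1}{2^{n-1}}\sum_{k=0}^{d-1}\binom{n-1}{k}.
\]
Wait — this is the \emph{wrong direction}; the lemma claims an \emph{upper} bound $M_{n,d}\le 2^{-(n-1)}\sum_{k=d}^{n-1}\binom{n-1}{k}$. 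By the binomial symmetry $\sum_{k=d}^{n-1}\binom{n-1}{k} = \sum_{k=0}^{n-1-d}\binom{n-1}{k}$, so the bound to prove is $M_{n,d}\le 2^{-(n-1)}\sum_{j=0}^{n-1-d}\binom{n-1}{j}$. So I would instead argue that $Y\in P_{n,d}^\mu$ \emph{forces} the reflected configuration to be split by \emph{every} hyperplane through $Y$, which by Wendel's count happens with probability at most $2^{-(n-1)}\sum_{k=0}^{n-1-d}\binom{n-1}{k}$ — i.e. the event $Y\in P_{n,d}^\mu$ is contained in the Wendel "not-in-a-halfspace" event for $n$ points, giving exactly the stated bound after re-indexing. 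The symmetrization step is valid because reflecting points only makes containment-in-a-halfspace \emph{more} likely, so it gives an upper bound on $\Prob{Y\in P_{n,d}^\mu}$.

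**The main obstacle** I anticipate is making the symmetrization/Wendel argument rigorous without assuming general position — the measure $\mu$ may be atomic or degenerate, so I must handle ties and coplanarities. The fix is standard: Wendel's bound $2^{-(N-1)}\sum_{k}\binom{N-1}{k}$ holds as an \emph{inequality} ($\le$) for arbitrary (not necessarily generic) point distributions once one introduces independent symmetric sign flips $\epsilon_i\in\{\pm1\}$ and observes that $\{Y\in\mathrm{conv}\{X_i\}\}$ implies that for \emph{every} choice of signs the points $\{\epsilon_i(X_i-Y)\}$ fail to lie in a common open halfspace through $0$; averaging over the $2^n$ sign patterns and using linearity of expectation bounds the probability by the fraction of sign patterns (given the $X_i$) for which the vectors $\pm v_i$ cannot all be placed in an open halfspace, and that fraction is at most $1 - 2^{-(n-1)}\sum_{k=0}^{d-1}\binom{n-1}{k} = 2^{-(n-1)}\sum_{k=d}^{n-1}\binom{n-1}{k}$ by the classical Schläfli/Wendel hyperplane-arrangement count, which is purely combinatorial and needs no genericity (it is an \emph{upper} bound in the degenerate case). **Finally**, taking the supremum over all admissible $\mu$ yields \eqref{eq:Mnd}, and specializing to $n=d+1$ gives $\sum_{k=d}^{d}\binom{d}{d}=1$, hence $M_{d+1,d}\le 2^{-d}$, which in particular tends to $0$ as $d\to\infty$, establishing the claim from the introduction.
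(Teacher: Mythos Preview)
Your overall strategy coincides with the paper's: write $\E\vol_d(P_{n,d}^\mu)/\vol_d(C_\mu)$ as the probability that a uniform point $Y\in C_\mu$ falls in $P_{n,d}^\mu$, then bound $\P(0\in\mathrm{conv}\{X_1-Y,\dots,X_n-Y\})$ by a Wendel-type quantity. The difference is in how that last bound is justified, and here your argument has a genuine gap.

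Your key implication --- ``$Y\in\mathrm{conv}\{X_i\}$ implies that for \emph{every} choice of signs $\epsilon\in\{\pm1\}^n$ the vectors $\epsilon_i(X_i-Y)$ fail to lie in a common open halfspace through $0$'' --- is false. Take $d=1$, $n=2$, $X_1=1$, $X_2=-1$, $Y=0$. Then $Y\in\mathrm{conv}\{X_1,X_2\}$, but with $\epsilon=(1,-1)$ both flipped vectors equal $1$ and lie in the open halfspace $(0,\infty)$. Without this implication the averaging step does not yield an upper bound: what you would need is that the \emph{particular} sign pattern $\epsilon=(1,\dots,1)$ is no more likely to produce $0\in\mathrm{conv}\{\epsilon_i v_i\}$ than a uniformly random pattern, and that is precisely the nontrivial content of the Wagner--Welzl inequality, not a consequence of the Schl\"afli/Wendel region count alone. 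Your earlier remark that ``reflecting points only makes containment-in-a-halfspace more likely'' is also in the wrong direction: if symmetrisation raised $\P(\text{all }v_i\text{ in a halfspace})$ it would \emph{lower} $\P(0\in\mathrm{conv}\{v_i\})$, giving a lower bound rather than the upper bound you need.

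The paper closes this gap by first reducing the supremum to absolutely continuous measures (via a density/approximation argument, since the expected volume is weakly continuous in $\mu$) and then invoking the Wagner--Welzl theorem directly, which states that for absolutely continuous distributions $\P(0\in P_{n,d}^{\mu-x})\le 2^{-(n-1)}\sum_{k=d}^{n-1}\binom{n-1}{k}$, with equality in the symmetric (Wendel) case. You should either cite that result or supply a correct proof of it; the symmetrisation sketch as written does not do the job.
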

\begin{proof}
Let $\mu$ be a probability measure whose support is compact and not contained in a hyperplane. We need to bound the ratio appearing in the supremum in Equation \eqref{eq:Mnd}.
Without loss of generality we can assume that \(\mathrm{conv}(\mathrm{supp}(\mu))\) is a convex body \(K\) of unit volume. It is known (see e.g. \cite[Example 8.1.6]{bogachev2007measure}) that the set of probability measures of the form 
\[
\nu=\sum_{i=1}^m c_i \delta_{x_i},\quad m\in\N,\, c_i>0,\, \sum_{i=1}^m c_i=1,\, x_i\in K,
\]
where \(\delta_{x_i}\) is the Dirac measure concentrated at \(x_i\), is dense in the set of probability measures on \(K\) equipped with the weak topology. Moreover, each of these \(\nu\) admits an approximation via absolutely continuous probability measures with respect to the Lebesgue measure \(\de x\). For example, for all \(\epsilon>0\), the measure \(\nu_\epsilon\) such that
\[
\frac{\de \nu_\epsilon}{\de x}(x)=\sum_{i=1}^m \frac{c_i}{\vol_d((x_i+\epsilon\ball)\cap K)}\mathbf{1}(x\in\{(x_i+\epsilon\ball)\cap K\})
\]
is such an approximation. 
Since the expected volume of a random polytope \(P_{n,d}^\mu\) is a continuous function of $\mu$ with respect to the weak topology, we have that
\begin{equation*}
    M_{n,d} = \sup \bigl\{ \E \vol_d (P_{n,d}^{\mu}):\vol_d ( \mathrm{conv}(\mathrm{supp} (\mu)))=1, \mu \ll \de x \bigr\} .
\end{equation*}
By switching the order of integration, the expected volume of the random polytope $P_{n,d}^{\mu}$ can be written as follows,
\begin{equation}
\label{eq:PxinP}
    \E \vol_d (P_{n,d}^{\mu}) 
    = \int_ K \P ( x \in P_{n,d}^{\mu} ) \de x .
\end{equation}
By shifting $x$ and $\mu$ by the vector $(-x)$ the last integrand is the probability that the origin is contained in the random convex hull $P_{n,d}^{\mu-x}$.
Since $\mu$ is absolutely continuous we can apply an inequality of Wagner and Welzl \cite{wagner2001continuous}, which extends a famous identity due to Wendel \cite{wendel1962problem} valid in the symmetric setting, and tells us that
\begin{equation}
    \P ( x \in P_{n,d}^{\mu} )
    = \P ( 0 \in P_{n,d}^{\mu-x} )
    \leq \frac{1}{2^{n-1}} \sum_{k=d}^{n-1} \binom{n-1}{k}.
\end{equation}
Hence, we can bound \eqref{eq:PxinP} as
\begin{equation*}
    \E \vol_d (P_{n,d}^{\mu}) \le \oneover{2^{n-1}}\sum_{k=d}^{n-1} \binom{n-1}{k},
\end{equation*}
which proves the claim.
\end{proof}
Note that the RHS of \eqref{eq:Mnd} is the probability that the sum of \(n-1\) independent copies of a Bernoulli random variable with parameter \(1/2\) is greater than or equal to $d$. Therefore, it is a consequence of the weak law of large numbers that
\begin{equation*}
   \lim_{d\to\infty} M_{n,d} = 0, 
\end{equation*}
whenever \(n \leq (2-\epsilon)\,d\) for any fixed \(\epsilon \in (0,1)\).

It is easy to show that, as soon as \(n=n(d)\) grows like \(d\log d\), \(M_{n,d}\) cannot vanish as the dimension increases. To see this, consider \(\bar\mu\) to be the discrete uniform distribution on the \(d+1\) vertices of a simplex \(\Delta=\mathrm{conv}\{v_1,\ldots,v_{d+1}\}\subset\R^d\), for which it holds that
\[
\frac{\E \vol_d (P_{n,d}^{\bar\mu})}{ \vol_d ( \Delta) }=\P(\{X_1,\ldots,X_n\}=\{v_1,\ldots,v_{d+1}\}).
\]
It is well known from the coupon collector's problem (e.g. \cite{coupon}) that, as \(d\) diverges, such quantity tends to \(1/e\) if \(n= d\log d+o(d)\), so in this regime \(M_{n,d}\) does not vanish. In contrast, if \(n\le (1-\epsilon) d\log d\) for any fixed \(\epsilon\in(0,1)\), the above probability tends to \(0\) and the question of whether or not \(\lim_{d\to\infty} M_{n,d}=0\) remains open.

\section*{Acknowledgments}
We would like to thank Christoph Th\"ale (Bochum) for initiating this collaboration. The work of Zakhar Kabluchko has been supported by the German Research Foundation under Germany’s Excellence Strategy EXC 2044 - 390685587, Mathematics M\"unster: Dynamics - Geometry - Structure. Nicola Turchi is supported by the FNR grant FoRGES (R-AGR-3376-10) at Luxembourg University.

\bibliographystyle{abbrv}
\bibliography{phasetransition}

\end{document}